\theoremstyle{plain}
\newtheorem{thm}{Theorem}
\newtheorem{lem}[thm]{Lemma}
\newtheorem{fact}[thm]{Fact}
\newtheorem{prop}[thm]{Proposition}
\theoremstyle{definition}
\newtheorem{exmp}[thm]{Example}
\newtheorem*{rem*}{Remark}
\newtheorem{rem}{Remark}
\newcommand{\R}{\mathds{R}}
\newcommand{\Z}{\mathds{Z}}
\newcommand{\E}{\mathcal{E}}
\newcommand{\N}{{\mathds{N}}}
\newcommand{\ind}{\mathds{1}}
\newcommand{\cA}{\mathcal{A}}
\newcommand{\cE}{\mathcal{E}}
\newcommand{\suml}{\sum\limits}
\newcommand{\il}{\int\limits}
\newcommand{\iil}{\iint\limits}
\newcommand{\as}[2][r_0]{(K#2,$#1$)}
\DeclareMathOperator{\dist}{dist}
\DeclareMathOperator{\diam}{diam}
\DeclareMathOperator{\supp}{supp}
\begin{document}

\title[Nonlocal Dirichlet forms] {Comparability and regularity estimates for
symmetric nonlocal Dirichlet forms}

\author{Bart{\l}omiej Dyda}
\author{Moritz Kassmann}

\address{Fakult\"{a}t f\"{u}r Mathematik\\Universit\"{a}t Bielefeld\\Postfach 100131\\D-33501 Bielefeld}


\thanks{Both authors have been supported by the German Science Foundation DFG through \\ SFB 701. The first author was additionally supported by MNiSW grant N N201 397137.}

\keywords{Dirichlet forms, H\"{o}lder estimates}

\subjclass[2010]{31B05, 35B45, 35B05, 35R05, 47G20, 60J75}

\date{September 30, 2011}

\begin{abstract}
The aim of this work is to study comparability of nonlocal Dirichlet forms. We
provide sufficient conditions on the kernel for local and global comparability.
As an application we prove a-priori estimates in H\"{o}lder spaces for solutions
to 
integrodifferential equations. These solutions are defined with the help of
symmetric nonlocal Dirichlet forms. 
\end{abstract}

\maketitle

\section{Introduction}

If, for every $x\in \R^d$, $A(x)$ is a positive definite matrix which is uniformly
bounded, then for every ball $B \subset \R^d$ and every function $u \in
C^\infty_c(B)$ 
\begin{align}\label{eq:loc_com_two}
 \int\limits_B \langle \nabla u(x), A(x) \nabla u(x) \rangle \, dx
\asymp \int\limits_B |\nabla u(x)|^2 \, dx \,.
\end{align}
This property is crucial for many questions related to partial differential
operators of second order in divergence form and to diffusion processes
generated by local Dirichlet forms. The aim of the present work is to study
similar properties for symmetric nonlocal Dirichlet forms.

Fix $\alpha_0 \in (0,2)$. Let $\mathcal{K}$ denote a family of kernels $k_\alpha:\R^d \times \R^d \to [0,\infty)$ which depend on indices $\alpha \in (\alpha_0,2)$. We consider the corresponding bilinear forms
\[ \iil_{\R^d \R^d} \big(u(y)-u(x)\big)\big(v(y)-v(x)\big) k_\alpha(x,y) \,dy \, dx \,, \qquad u,v \in C^\infty_c(\R^d) \,. \]
We study the question, under which additional assumptions on the kernels $k_\alpha \in \mathcal{K}$
local comparability holds, i.e. for every kernel $k_\alpha \in \mathcal{K}$, small ball $B$ and every
function $u \in C^\infty_c(B)$
\begin{align}\label{eq:assum_aim}\tag{A}
 \iil_{B B} \big(u(y)-u(x)\big)^2 k_\alpha(x,y) \,dy \, dx \asymp
(2-\alpha) \iil_{B B} \frac{\big(u(y)-u(x)\big)^2}{|x-y|^{d+\alpha}} \,dy \, dx \,.
\end{align}
This relation means that the ratio of the two quantities is bounded from below
and above by two uniform positive
constants which do not depend on $k_\alpha \in \mathcal{K}$. 

Note that this problem is interesting and unsettled even if all kernels $k_\alpha$ correspond to one fixed index $\alpha \in (0,2)$. The case $\alpha = 2$ corresponds to (\ref{eq:loc_com_two}).

For $\alpha \in (0,2)$ set $\cA_{d,-\alpha} = \frac{\alpha\Gamma((d+\alpha)/2)}{2^{1-\alpha}\pi^{d/2}\Gamma(1-\alpha/2)}$. Note that $\cA_{d,-\alpha} \asymp\alpha(2-\alpha)$ for all $\alpha \in (0,2)$. Fix $\alpha_0 \in (0,2)$  and $c>0$. A standard example where relation (\ref{eq:assum_aim}) holds true is given by the family $\mathcal{K}=\{k_\alpha | \alpha \in (\alpha_0,2) \} $ where $k_\alpha$ is any kernel satisfying 
\begin{align}\label{eq:assum_k_simple}
 c \cA_{d,-\alpha} |x-y|^{-d-\alpha} \leq k_\alpha(x,y) \leq c^{-1} \cA_{d,-\alpha} |x-y|^{-d-\alpha} 
\end{align}
for almost every $x,y \in \R^d$.

In this work we
give sufficient conditions which  are more general than
(\ref{eq:assum_k_simple}). It is easy to see that (\ref{eq:assum_k_simple}) is
not necessary for (\ref{eq:assum_aim}). Define $\widetilde{\mathcal{K}} =\{\widetilde{k}_\alpha | \alpha \in (\alpha_0,2)\}$ with $\widetilde{k}_\alpha(x,y) = k_\alpha(x,y) (\mathbbm{1}_{\{|x|\leq 0.1 |y| \}} + \mathbbm{1}_{\{|y|\leq 0.1 |x| \}})$ where $k_\alpha$ is any kernel satisfying (\ref{eq:assum_k_simple}), then the kernels $\widetilde{k}_\alpha$ do not satisfy (\ref{eq:assum_k_simple}) but (\ref{eq:assum_aim}) is still satisfied for all $\widetilde{k}_\alpha \in \widetilde{\mathcal{K}}$.

One application of our investigation are local Poincar\'{e}- and
Sobolev\--inequali\-ties, see \cite{MaSh02, Ponce04}. Those
inequalities together with a class of
appropriate
cutoff-functions lead to regularity estimates for symmetric nonlocal
Dirichlet forms. We assume that, for some constant $c>0$, and every $R,\rho \in (0,1)$ there is a
nonnegative function $\tau \in C^\infty(\R^d)$ with $\operatorname{supp}(\tau) =
\overline{B_{R+\rho}}$, $\tau(x) \equiv 1$ on $B_{R}$, and for every $k$ 
\begin{align}\label{eq:assum_cutoff}\tag{B}
\sup\limits_{x \in \R^d} \; \il_{\R^d} \big(\tau(y) - \tau(x)\big)^2   k(x,y) \, dy \leq c \rho^{-\alpha} \,.
\end{align}
Note that for $\alpha = 2$ Assumption
$(\ref{eq:assum_cutoff})$ asks for the existence of a cut-off function $\tau$
with $\sup\limits_{x \in \R^d} |\nabla \tau|^2(x) \leq c \rho^{-2}$. Such
$\tau$ obviously exists.

We are able to establish conditions (A) and
(B) under quite mild assumptions. Let us always assume $k(x,y)=k(y,x)$ which is
not a restriction since our bilinear forms are symmetric. Without mentioning it we always assume that for almost every $x,y\in\R^d$
\begin{equation}\label{A:LU}\tag{K}
 L(x-y) \leq k(x,y) \leq U(x-y) \,,
\end{equation}
for some functions $L,U:\R^d \to [0,\infty)$ satisfying $L(x)=L(-x)$,
$U(x)=U(-x)$ for almost every $x \in \R^d$,
$L\neq 0$ on a set of positive measure, and
\begin{equation}\label{A:U0}\tag{U0}
\int_{\R^d} (|z|^2 \wedge 1) U(z)\,dz \leq C_0 < \infty. 
\end{equation}
 Our main assumptions are
the following:
\begin{itemize}
\item[(U1)]
There exists $C_1 >0$  such that for every $r \in (0,1)$
\begin{equation}\label{U1}
 \int_{B(0,r)} |z|^2 U(z) \,dz \leq C_1 r^{2-\alpha} \,.
\end{equation}
\item[(L1)]
There exist $a>1$ and $C_2$, $C_3$ such that every annulus
$B_{a^{-n+1}}\setminus
B_{a^{-n}}$ ($n=0,1,\ldots$)
contains a ball $B_n$ with radius $C_2 a^{-n}$, such that
\begin{equation}\label{scaling}
 L(z) \geq C_3(2-\alpha) |z|^{-d-\alpha}, \quad z \in B_n.
\end{equation}
\end{itemize}

Then we can prove the following result:
\begin{thm}\label{thm:U1L1implyAB}
Assume the function $k:\R^d \times \R^d \to [0,\infty)$ satisfies (U0), (U1) and
(L1). Then conditions (A) and (B) are satisfied. If the constants $C_0, C_1, C_2,
C_3$ appearing in (U0), (U1), (L1) are independent of $\alpha \in (\alpha_0,2)$ ($\alpha_0>0$),
then so are the
constants in (A) and (B). 
\end{thm}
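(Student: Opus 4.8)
The plan is to prove the two assertions separately, and for each to eliminate $k$ at once via the sandwich $L(x-y)\le k(x,y)\le U(x-y)$: for condition (B) and for the upper bound in (A) we pass to $U$, and for the lower bound in (A) we pass to $L$. Throughout we use that $\mathcal A_{d,-\alpha}\asymp\alpha(2-\alpha)\asymp(2-\alpha)$ on $(\alpha_0,2)$, so that the right-hand side of (A) is, up to $\alpha$-uniform constants, the normalized energy $\mathcal A_{d,-\alpha}\iint_{BB}(u(y)-u(x))^2|x-y|^{-d-\alpha}\,dy\,dx$; all constants below will be seen to depend only on $d,\alpha_0$ and the structural constants $a,C_0,C_1,C_2,C_3$.

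\emph{Condition (B).} This is the easy part and uses only (U0) and (U1). Fix $R,\rho\in(0,1)$ and take a standard mollified bump $\tau\in C^\infty(\R^d)$ with $\supp\tau=\overline{B_{R+\rho}}$, $\tau\equiv1$ on $B_R$, $0\le\tau\le1$ and $\|\nabla\tau\|_\infty\le c\rho^{-1}$. Since $k(x,y)\le U(x-y)$ it suffices to bound $\int_{\R^d}(\tau(x+z)-\tau(x))^2U(z)\,dz$ uniformly in $x$. Split the $z$-integral into $\{|z|<\rho\}$, where $(\tau(x+z)-\tau(x))^2\le\|\nabla\tau\|_\infty^2|z|^2$ and (U1) with $r=\rho$ gives $\le c\rho^{-2}\cdot C_1\rho^{2-\alpha}=cC_1\rho^{-\alpha}$; into $\{\rho\le|z|<1\}$, where $(\tau(x+z)-\tau(x))^2\le1$ and summing (U1) over dyadic annuli between $\rho$ and $1$ yields $\le c\,C_1\,\rho^{-\alpha}$, the geometric series converging $\alpha$-uniformly because $\alpha\ge\alpha_0>0$; and into $\{|z|\ge1\}$, where (U0) gives $\le C_0\le C_0\rho^{-\alpha}$. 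This proves (B).

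\emph{Upper bound in (A).} Here it remains to show $\iint_{BB}(u(y)-u(x))^2U(x-y)\,dy\,dx\le C(2-\alpha)\iint_{BB}(u(y)-u(x))^2|x-y|^{-d-\alpha}\,dy\,dx$ for $u\in C^\infty_c(B)$ with $B$ a small ball of diameter $D$. The plan is a dyadic decomposition of $\{|x-y|<D\}$ into annuli $A_j=\{2^{-j-1}\le|x-y|<2^{-j}\}$: on $A_j$, (U1) (used at the radius $2^{-j}$) controls the $U$-mass of the annulus by $C\,2^{j(d+\alpha)}$, i.e.\ in every ball about the origin $U$ carries at most a constant multiple of the $|z|^2$-weighted mass of the kernel $(2-\alpha)|z|^{-d-\alpha}$; combining this with the fact that the smoothness of $u$ controls the size of $(u(y)-u(x))^2$ at the scale $2^{-j}$ (and with the slow variation in $|z|$ of $z\mapsto\|u(\cdot+z)-u\|_{L^2(B)}$, which follows from subadditivity), one bounds the contribution of $A_j$ by a constant times the energy localized near scale $2^{-j}$, and then sums over $j$. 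The slightly delicate point — which must be handled with care rather than by the crude Lipschitz bound — is to match scales so that the $\tfrac1{2-\alpha}$-type factors produced by the geometric summation appear on both sides and cancel; this is what prevents a spurious loss of the $(2-\alpha)$.

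\emph{Lower bound in (A) — the main obstacle.} Using $k\ge L$ and (L1), and that $|x-y|^{-d-\alpha}\asymp a^{n(d+\alpha)}$ on $B_n$ (and on $-B_n$, by symmetry of $L$),
\[
\iint_{BB}(u(y)-u(x))^2k(x,y)\,dy\,dx\;\ge\;c\,(2-\alpha)\sum_{n\ge0}a^{n(d+\alpha)}\iint_{\substack{x,y\in B\\ x-y\in B_n\cup(-B_n)}}(u(y)-u(x))^2\,dy\,dx,
\]
so the task reduces (after cancelling $2-\alpha$) to dominating $\iint_{BB}(u(y)-u(x))^2|x-y|^{-d-\alpha}\,dy\,dx\asymp\sum_n a^{n(d+\alpha)}\iint_{x,y\in B,\;a^{-n}\le|x-y|<a^{-n+1}}(u(y)-u(x))^2\,dy\,dx$ by the same sum taken over the good balls. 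Since the $B_n$ occur at every scale but are placed arbitrarily inside the annuli, one recovers the full energy by a chaining/covering argument: for $x,y$ at distance $\approx a^{-n}$ one writes $u(y)-u(x)$ as a telescoping sum along a chain of auxiliary points whose consecutive increments lie in translates of the $B_m\cup(-B_m)$, averaging each auxiliary point over a ball of radius $\sim C_2a^{-m}$ so that the squared increments turn back into pieces of the right-hand side; the mechanism that makes this work is that each $B_m$ is a \emph{full} ball of radius $C_2a^{-m}$ and such balls are available at every scale $m\ge n$, which lets the geometrically decaying errors of the chain be absorbed and keeps the multiplicities bounded in terms of $a,C_2,d$ only. (Equivalently, on the Fourier side the multiplier $m_{\mathrm{good}}(\xi)=\sum_n a^{n(d+\alpha)}\int_{B_n}(1-\cos(\xi\cdot z))\,dz$ is $\gtrsim(2-\alpha)^{-1}|\xi|^{\alpha}$, which one sees from the Bessel-type identity $\int_{B_n}(1-\cos(\xi\cdot z))\,dz=|B_n|\bigl(1-\cos(\xi\cdot\xi_n)\,\Phi(C_2a^{-n}|\xi|)\bigr)$ together with the estimate $1-\cos(\xi\cdot\xi_n)\Phi(C_2a^{-n}|\xi|)\gtrsim(a^{-n}|\xi|)^2$ for large $n$, uniformly in the direction of $\xi_n$.) I expect the genuine difficulty to be carrying this argument out \emph{inside} $B\times B$ rather than over $\R^d\times\R^d$: the localized and global fractional energies on $B$ are not comparable for $\alpha<1$, so the convenient device of enlarging the domain of integration is unavailable and the chains must be kept (essentially) within $B$, which is possible because the relevant scales $a^{-n}\le D$ are small. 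Once the four estimates are in place, the independence of the resulting constants from $\alpha\in(\alpha_0,2)$ is visible by inspection, since each step used (U0), (U1) and (L1) only through $C_0,C_1,C_2,C_3$ and the summations were $\alpha$-uniform thanks to $\alpha\ge\alpha_0$.
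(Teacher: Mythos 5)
Your treatment of (B) is fine and is essentially the paper's argument (Lemma~\ref{lem:U1'} plus Proposition~\ref{prop:U1impliesB}). The upper bound in (A), however, has a genuine gap at exactly the point you flag as ``delicate'': bounding the contribution of each annulus $A_j$ by $\int_{A_j}U\lesssim C_1 2^{j\alpha}$ times ``the energy localized near scale $2^{-j}$'' and then summing over $j$ loses precisely the factor $(2-\alpha)$, and no rearrangement of scales inside that scheme recovers it. Concretely, for a fixed smooth $u$ the normalized energy at scale $2^{-j}$ behaves like $2^{-2j}\|\nabla u\|_{L^2}^2$ per unit mass of kernel, so your scheme produces $\sum_j C_1 2^{j\alpha}2^{-2j}\|\nabla u\|_{L^2}^2\asymp C_1(2-\alpha)^{-1}\|\nabla u\|_{L^2}^2$, whereas the right-hand side of (A) stays $\asymp\|\nabla u\|_{L^2}^2$ as $\alpha\to2$; indeed (U1) allows $U$ to put mass $\approx C_1 2^{j\alpha}$ on a single annulus, which is $1/(2-\alpha)$ times the mass the normalized kernel $(2-\alpha)|z|^{-d-\alpha}$ carries there, so any annulus-by-annulus comparison is necessarily off by $1/(2-\alpha)$. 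One must use (U1) in its cumulative form $\int (r^2\wedge|z|^2)U(z)\,dz\le C_4 r^{2-\alpha}$ (Lemma~\ref{lem:U1'}) paired against a representation of $\|u(\cdot+z)-u\|_{L^2}^2$ by functions of the shape $|z|^2\wedge\mathrm{const}$; the paper does this via Plancherel, $\int 4\sin^2(\xi\cdot z/2)U(z)\,dz\le 4C_4|\xi|^\alpha$, and then needs an extension operator with $\alpha$-uniform norm (Fact~\ref{fact:extension}) and the fractional Poincar\'e inequality (Fact~\ref{fact:Poincare}) to localize to the ball and absorb the $L^2$ term. None of these ingredients, nor a substitute for them, appears in your sketch.

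For the lower bound your core idea is in substance the paper's: averaging the intermediate point of the chain over a ball of radius $\sim C_2a^{-m}$ is exactly the convolution $q_n\ast q_n$, and since $B_n+(-B_n)$ contains a full ball about the origin, a two-step chain suffices (Lemma~\ref{lem:q} and Proposition~\ref{prop:homo}). What you leave unproved is the localization step that you yourself identify as the genuine difficulty; ``the chains can be kept within $B$ because the relevant scales are small'' is not an argument. The construction only yields $\E^\alpha_{B_{cr}}(u,u)\lesssim\E^L_{B_r}(u,u)$ for a strictly smaller concentric ball (radius $C_2r/(4a)$ in the paper), and passing from the smaller ball back to $B_r$ is precisely where, for $\alpha<1$, naive domain enlargement fails -- the non-comparability you mention. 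The paper closes this with Lemma~\ref{lem:Whitney}: a Whitney covering of the ball combined with the comparability of the truncated form $\iint_{|x-y|<c\,\dist(x,D^c)}$ with the full form on $D$ from \cite{Dyda2}, with constants checked to be $\alpha$-uniform by scaling. Your Fourier-side alternative is a whole-space computation and does not address this localization at all. So the architecture is right, but the two technically substantive parts of the theorem -- the $(2-\alpha)$-sharp upper bound and the localization of the lower bound -- are missing.
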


Let us explain applications of this result. By
$H^{\alpha/2}(\R^d)$ we denote the usual Sobolev space of fractional order
$\alpha/2 \in (0,1)$, see \eqref{eq:Sobolevnorm}. If $\Omega \subset \R^d$ is open and bounded,
then $H^{\alpha/2}(\Omega)$ is the space of measurable functions $f:\Omega \to
\R$ which can be represented as restrictions of $H^{\alpha/2}(\R^d)$
to $\Omega$. $H^{\alpha/2}_{\text{loc}}(\Omega)$ denotes the space of all
measurable functions $f:\Omega \to \R$ such that $\phi f \in
H^{\alpha/2}(\Omega)$ for every $\phi \in C_c^\infty(\Omega)$. 

Conditions (A) and (B) allow us to apply several techniques which were
developed for partial differential operators of second order or local Dirichlet
forms respectively. The following weak Harnack inequality holds true for
supersolutions, see \cite{Kas09, Kas11b}.

\begin{thm} \label{theo:weak_harnack}
Assume (A) and (B) hold true. Let $\alpha_0 \in (0,2)$. There are positive reals
$p_0, c$ such that for every $\alpha \in (\alpha_0,2)$ and $u \in L^\infty(\R^d)
\cap H^{\alpha/2}_{\text{loc}}(B_{1})$ with $u \geq 0$ in $B_1$ satisfying
$\cE(u,\phi) \geq 0$ for every nonnegative $\phi \in C_c^\infty(B_1)$
the following inequality holds:
\[   c \inf\limits_{B_{1/4}} u \geq \big( \fint\limits_{B_{1/2}} u(x)^{p_0} \,
dx \big)^{1/p_0} - c \sup\limits_{x \in B_{1/2}} \int\limits_{\R^d \setminus
B_1} u^-(z) k(x,z) \, dz \,. \] 
The constants $p_0, c$ depend only on $d, \alpha_0$ and on the constants arising in
(A) and (B). 
\end{thm}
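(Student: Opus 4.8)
The plan is to run a Moser iteration adapted to symmetric nonlocal forms, in the spirit of \cite{Kas09, Kas11b}. Conditions (A) and (B) are precisely the two ingredients this needs: (A) yields a fractional Sobolev inequality on small balls with a constant that stays bounded as $\alpha\uparrow 2$ (this is exactly why the factor $2-\alpha$ is built into (A)), and (B) supplies a family of cut-off functions whose nonlocal energy is controlled by $\rho^{-\alpha}$, which is what localises the form and what tames the interaction of $B_1$ with its complement. The tail term on the right-hand side of the asserted inequality is the residue of that interaction; it is genuinely needed because $u$ is only assumed to be a supersolution, and nonnegative, on $B_1$ rather than globally.

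First I would establish the basic energy (Caccioppoli-type) estimate. Fixing concentric balls $B_r\subset B_R\subset B_1$ and a cut-off $\tau$ between them as in (B) (so $\supp\tau\subset\overline{B_{R+\rho}}\subset B_1$), I set $u_\eps=u+\eps$ for $\eps>0$ and test the supersolution inequality, after the standard density argument that makes the test function admissible, with $\phi=\tau^2\big(u_\eps^{\,q}-\eps^{\,q}\big)$ for $q<0$; this $\phi$ is nonpositive, which turns the inequality in the useful direction. Expanding $\tau(y)^2u_\eps(y)^q-\tau(x)^2u_\eps(x)^q$ in the double integral, using the pointwise inequality underlying Moser's scheme to absorb the mixed terms, and splitting $\R^d\times\R^d$ into $B_1\times B_1$ and its complement, I expect to reach an estimate of the form
\[
\iint_{B_r B_r}\big(w(y)-w(x)\big)^2 k(x,y)\,dy\,dx \ \le\ C\Big(\rho^{-\alpha} + \sup_{x\in B_R}\int_{\R^d\setminus B_1} u^-(z)\,k(x,z)\,dz\Big)\sup_{B_R} w^2,
\]
with a constant $C$ that, away from $q=-1$, also depends on $|q+1|$ in the usual way, where $w=u_\eps^{(q+1)/2}$ when $q\neq-1$ and $w=\log u_\eps$ in the borderline case $q=-1$. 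The $\rho^{-\alpha}$ term is produced by (B), while the second term comes from the part of the domain where one argument leaves $B_1$, using $u^-=0$ on $B_1$ and (U0) together with the support of $\tau$ to discard all remaining far-field contributions.

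Next I would feed in the Sobolev inequality. By (A) and the classical fractional Sobolev inequality (see \cite{MaSh02, Ponce04}), the left-hand side above dominates $\|w\|_{L^{2\kappa}(B_r)}^2$ with $\kappa=d/(d-\alpha)>1$ and a constant depending only on $d$ and $\alpha_0$. Iterating over a geometric sequence of radii between $1/4$ and $1/2$ and over exponents $q=q_j\to 0^-$ gives, for every $p>0$, a bound $\sup_{B_{1/4}}u_\eps^{-1}\le c(p)\big(\fint_{B_{1/2}}u_\eps^{-p}\big)^{1/p}$. The borderline exponent $q=-1$ gives instead $[\log u_\eps]^2_{H^{\alpha/2}(B_{1/2})}\le C$, whence $\log u_\eps\in\mathrm{BMO}(B_{1/2})$ with a controlled seminorm via the fractional Poincar\'e inequality, and John--Nirenberg then produces $p_0=p_0(d,\alpha_0)>0$ such that $\fint_{B_{1/2}}u_\eps^{p_0}$ and $\fint_{B_{1/2}}u_\eps^{-p_0}$ are both comparable to $\exp\big(\pm p_0(\log u_\eps)_{B_{1/2}}\big)$. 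Combining these two facts — equivalently, invoking the Bombieri--Giusti lemma — yields $\big(\fint_{B_{1/2}}u_\eps^{p_0}\big)^{1/p_0}\le c\inf_{B_{1/4}}u_\eps + c\sup_{x\in B_{1/2}}\int_{\R^d\setminus B_1}u^-(z)\,k(x,z)\,dz$, and letting $\eps\downarrow 0$ absorbs the additive constant $\eps$ and leaves the claimed inequality.

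The hard part will be the uniformity in $\alpha$: no single step is exotic, but every constant — the Sobolev constant, the number of iteration steps, the constant in the convexity inequality, the John--Nirenberg exponent — must be tracked so as to depend only on $d$ and $\alpha_0$ and not to degenerate as $\alpha\uparrow 2$; note in particular that $\kappa=d/(d-\alpha)$ stays in a fixed compact subinterval of $(1,\infty)$ for $\alpha\in(\alpha_0,2)$ when $d\ge 3$, with the customary modifications when $d=1,2$. The second point requiring genuine care, and where the argument leaves the classical local Moser scheme, is the nonlocal bookkeeping in the energy estimate: one must show that the pieces of $\R^d\times\R^d$ with an argument outside $B_1$ contribute exactly the explicit tail term and nothing worse, which is exactly where (U0) and the support properties of $\tau$ from (B) enter.
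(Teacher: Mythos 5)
Your outline is essentially the argument behind this theorem: the paper itself does not prove it but quotes it from \cite{Kas09, Kas11b}, and the proof there is exactly this scheme --- a Caccioppoli estimate obtained by testing with $\tau^2$ times negative powers of $u+\eps$ (with (B) providing the cut-offs and the complement of $B_1$ producing the tail term $\sup_{x\in B_{1/2}}\int_{\R^d\setminus B_1}u^-(z)k(x,z)\,dz$), the robust fractional Sobolev inequality via (A), Moser iteration over negative exponents, and the logarithmic estimate with John--Nirenberg, i.e.\ the Bombieri--Giusti lemma, to pass to a positive exponent $p_0$, all with constants tracked uniformly for $\alpha\in(\alpha_0,2)$. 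The only slip is in your displayed energy estimate, where the right-hand side should carry $\int_{B_{R+\rho}}w^2$ rather than $\sup_{B_R}w^2$ --- with a supremum there the iteration could not close, and indeed your subsequent steps implicitly use the $L^2$ form.
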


Throughout this article the abbreviation '$\sup$' shall denote the
essential supremum and '$\inf$' the essential infimum. It is possible to
combine Theorem \ref{theo:weak_harnack} and Theorem
\ref{thm:U1L1implyAB} in order to obtain regularity estimates. In order to focus
on the main issues we formulate a simple
assumption on $k$ for large values of $|x-y|$. We assume that there is $\gamma
\in (0,\alpha)$ such that  
\begin{align}\label{eq:assum_largejumps}\tag{U2}
\limsup\limits_{R \to \infty} R^\gamma \; \il_{|z|>R} U(z) \, dz \leq 1
\,.
\end{align}

Using conditions (L1), (U0), (U1) and (U2) the following nonlocal version of
DeGiorgi's regularity result can be established \cite{Kas11b}.

\begin{thm}\label{theo:reg_result}
Assume (L1), (U0), (U1) and (U2) hold true. Then there
exist $c>0$, $\beta \in (0,1)$ such that for every $x_0 \in \R^n$,
 $u \in L^\infty(\R^d) \cap H^{\alpha/2}_{\text{loc}}(B_{1}(x_0))$
satisfying $\cE(u,\phi) = 0$ 
for every $\phi \in C_c^\infty(B_1(x_0))$
the following H\"{o}lder estimates holds for almost every $x,y \in
B_{1/2}(x_0)$:
\begin{align}\label{eq:hoelder_final_result} 
|u(x) - u(y)| \leq c \|u\|_\infty |x-y|^\beta \, .
\end{align}
If the constants appearing in (L1), (U0), (U1) and (U2) are independent of $\alpha\in (\alpha_0,2)$,
where $\alpha_0>0$, then so are the constants $c$ and $\beta$.
\end{thm}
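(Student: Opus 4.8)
The plan is to combine Theorem \ref{thm:U1L1implyAB} with the weak Harnack inequality of Theorem \ref{theo:weak_harnack}, following the classical De~Giorgi--Nash--Moser iteration scheme adapted to the nonlocal setting (as in \cite{Kas11b}). First I would invoke Theorem \ref{thm:U1L1implyAB}: since $k$ satisfies (U0), (U1) and (L1), conditions (A) and (B) hold, with constants independent of $\alpha \in (\alpha_0,2)$ whenever $C_0,C_1,C_2,C_3$ are. Thus the hypotheses of Theorem \ref{theo:weak_harnack} are met, and the remaining task is a purely real-variable iteration argument that upgrades the weak Harnack inequality into H\"older continuity. By translation invariance of all the assumptions it suffices to treat $x_0 = 0$.

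The core is an oscillation-decay estimate: I would show there exists $\theta \in (0,1)$, depending only on $d,\alpha_0$ and the constants in (A), (B), such that for every $r \in (0,\tfrac14]$,
\begin{equation*}
\operatorname*{osc}_{B_r} u \;\leq\; \theta \operatorname*{osc}_{B_{4r}} u \;+\; c\, r^\gamma \|u\|_\infty \,.
\end{equation*}
To obtain this, fix a ball $B_{4r} \subset B_1$ and set $m = \inf_{B_{2r}} u$, $M = \sup_{B_{2r}} u$. One of the two functions $M - u$ or $u - m$ has average at least $\tfrac12(M-m)$ on $B_r$; say it is $w := M - u \geq 0$ on $B_{2r}$. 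Rescaling to unit size and applying Theorem \ref{theo:weak_harnack} to $w$ gives
\begin{equation*}
\inf_{B_{r/2}} w \;\geq\; c_1 \Big(\fint_{B_r} w^{p_0}\Big)^{1/p_0} - c_1 r^{\alpha}\sup_{x}\int_{\R^d\setminus B_{2r}} w^-(z) k(x,z)\,dz \,.
\end{equation*}
The average term is bounded below by a fixed multiple of $M - m$ (using the choice of $w$ and a reverse-H\"older/measure argument), while the tail term is controlled using (U2): the factor $R^{-\gamma}$ decay of $\int_{|z|>R} U(z)\,dz$ yields $\int_{\R^d\setminus B_{2r}} k(x,z)\,dz \lesssim r^{-\gamma}$, so the tail contributes at most $c\,r^{\alpha-\gamma}\|u\|_\infty$. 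Reinserting $w = M - u$ turns $\inf_{B_{r/2}} w \geq \delta(M-m) - c r^{\alpha-\gamma}\|u\|_\infty$ into $\sup_{B_{r/2}} u \leq M - \delta(M-m) + c r^{\alpha-\gamma}\|u\|_\infty$, whence $\operatorname{osc}_{B_{r/2}} u \leq (1-\delta)\operatorname{osc}_{B_{2r}} u + c r^{\alpha-\gamma}\|u\|_\infty$. A standard iteration lemma (summing a geometric series against the inhomogeneous $r^{\alpha-\gamma}$ term, using $\gamma < \alpha$) then converts this into $\operatorname{osc}_{B_r} u \leq c \|u\|_\infty r^\beta$ for some $\beta \in (0,\min\{\gamma,\alpha-\gamma\})$, which is exactly \eqref{eq:hoelder_final_result}.

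The main obstacle is the careful bookkeeping of the \emph{nonlocal tail terms} through the rescaling and iteration: unlike the local case, applying the weak Harnack inequality on $B_r$ requires knowing $u$ on all of $\R^d$, and at each step of the iteration one must absorb the contribution of $u$ outside the current ball. This is handled by subtracting constants ($u$ may be replaced by $u - $ const without changing $\cE(u,\phi)=0$, but the tails involve the truncated functions $w^-$ which do change), so one needs the estimate $\sup_{x\in B_{r/2}}\int_{\R^d\setminus B_{2r}} k(x,z)\,dz \leq c\, r^{-\gamma}$ from (U0) together with (U2) to keep the tail subordinate to the oscillation gain. The $\alpha$-uniformity of the final constants follows by tracking that every constant invoked — from Theorem \ref{thm:U1L1implyAB}, from Theorem \ref{theo:weak_harnack}, and the tail bound from (U2) — is $\alpha$-independent, so $\theta$ and hence $\beta$ can be chosen uniformly in $\alpha \in (\alpha_0,2)$.
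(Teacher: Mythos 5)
Your overall architecture (get (A), (B) from Theorem \ref{thm:U1L1implyAB}, feed the weak Harnack inequality of Theorem \ref{theo:weak_harnack} into an oscillation-decay iteration) is the same as the paper's, but the step where you control the nonlocal tail contains a genuine gap, and it is precisely the step where the nonlocal difficulty lives. You bound the tail term by
$\|u\|_\infty \, r^{\alpha}\sup_{x\in B_{r/2}}\int_{\R^d\setminus B_{2r}} k(x,z)\,dz$ and claim $\sup_{x}\int_{\R^d\setminus B_{2r}} k(x,z)\,dz \lesssim r^{-\gamma}$ ``from (U0) together with (U2)''. This is false for small $r$: (U2) is only an asymptotic statement as $R\to\infty$, and for $r<1$ the mass of the kernel just outside $B_{2r}$ is not controlled by $r^{-\gamma}$. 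Indeed the model kernel $k(x,z)=(2-\alpha)|x-z|^{-d-\alpha}$ satisfies (L1), (U0), (U1), (U2), yet $\int_{\R^d\setminus B_{2r}(x)}k(x,z)\,dz \asymp r^{-\alpha}$; in general (U1') only gives the bound $C_4 r^{-\alpha}$. With the correct bound your tail term becomes of order $\|u\|_\infty$ (no power of $r$), the inhomogeneous term in your oscillation inequality does not decay, and the iteration produces no H\"older estimate. So the crude bound ``$w^-\le 2\|u\|_\infty$ times total kernel mass'' cannot work.

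The paper's Lemma \ref{lem:hh-nonloc-gen} is designed exactly to repair this: instead of estimating $w^-$ by $\|u\|_\infty$, one tracks the renormalized solution $v$ and uses the inductively established bounds $M_n-m_n\le K\theta^{-n\beta}$ at \emph{all coarser scales} to get the pointwise growth estimate $|v(y)|\le 2\theta^{j\beta}-1$ on the $j$-th dyadic annulus around the current ball. The tail is then a sum $\sum_j(\theta^{j\beta}-1)\eta_{\cdot,2(j-1)}$, where $\eta_{r,j}=\sup_x\nu^x_r(\R^d\setminus B_{2^jr})$ and $\nu^x_r$ is the kernel tail measure normalized by $\int_{\R^d\setminus B_r}U$; the normalization is of order $r^{\alpha}$ (here (L1) enters, via \eqref{eq:est_Ukomp}), while (U2) gives $\eta_{r,j}\le c\,2^{-\gamma j}$, so the $\eta$'s decay geometrically in $j$ and one can choose $\beta$ so small that the whole series is below a fixed small multiple of $\kappa$. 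In other words, the smallness of the tail comes from the interplay between the slow ($\theta^{j\beta}$) growth of the solution across annuli and the geometric decay of the normalized kernel mass of far annuli, not from smallness of the kernel mass near the ball. Your proposal, as written, is missing this mechanism; once you replace your tail estimate by this annulus-by-annulus bookkeeping (and the accompanying choice of $\beta$ after fixing $\kappa=(2c_2)^{-1}$), you recover the paper's proof.
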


Theorem \ref{theo:reg_result} is proved in resp. follows from the works
\cite{Kom95, BaLe02TAMS, ChKu03, CCV10} if one allows the constant $c$ in
(\ref{eq:hoelder_final_result}) to depend on $\alpha \in (0,2)$ with $c(\alpha)
\to +\infty$ for $\alpha \to 2$ and if one imposes a stronger isotropic
condition of the form
\begin{equation*}\label{scaling_iso}
\forall \, z \in B_1 (0):\quad  L(z) \geq C_3 |z|^{-d-\alpha} \,.
\end{equation*}
Techniques which are robust as $\alpha \to 2$ are developed for equations in non-divergence form in \cite{CaSi09}. 

The paper is organized as follows. In Section~\ref{sec:sufficient} we introduce
some notation used in sequel 
and prove Theorem~\ref{thm:U1L1implyAB}, breaking its proof into
three parts, namely Propositions~\ref{prop:U1impliesB}, \ref{prop:upper} and~\ref{prop:homo}.
We also provide an example of a kernel satisfying (A) and (B), but not (L1), see
Example~\ref{ex:cone}. In Section~\ref{sec:regularity} we provide the main
ideas of how to prove Theorem~\ref{theo:reg_result}.


\section{Properties of the bilinear form}\label{sec:sufficient}

In this section we prove Theorem~\ref{thm:U1L1implyAB}. The proof consists of
several propositions and lemmata. At the end of the
section we construct an example of a kernel satisfying (A) and (B), but not
(L1).

Let us fix $\alpha\in (0,2)$ and consider the following quadratic
forms
\begin{align}
\E^k_D(u,u) &= \int_D \!\int_D (u(y)-u(x))^2 k(x,y)\,dx\,dy,\quad u\in L^2(D),\label{eq:Ekd}\\
\E^\alpha_D(u,u) &= \alpha(2-\alpha) \int_D \!\int_D (u(y)-u(x))^2 |x-y|^{-d-\alpha} \,dx\,dy,\quad u\in L^2(D),\label{eq:Ead}
\end{align}
where  $D\subset \R^d$ is some open set.
Furthermore, we define Sobolev norm
\begin{equation}\label{eq:Sobolevnorm}
\|u\|_{H^{\alpha/2}(\R^d)} = \|u\|_{L^2(\R^d)} + \E^\alpha_D(u,u)^{1/2}.
\end{equation}

By $B(x,r)=\{z\in\R^d : |x-z|<r\}$ we denote the Euclidean ball with center $x$ and radius $0<r\leq \infty$,
and we use an abbreviation $B_r = B(0,r)$.
By $S^{d-1}=\{x\in\R^d : |x|=1\}$ we denote the unit sphere.

We define Fourier transform as an isometry of $L^2(\R^d)$ determined by
\[
 \hat{u}(\xi) = (2\pi)^{-d/2} \int_{\R^d} u(x)e^{-i\xi\cdot x}\,dx, \quad u\in L^1(\R^d)\cap L^2(\R^d).
\]

The following lemma contains a useful equivalent formulation of condition (U1).

\begin{lem}\label{lem:U1'}
Condition (U1) is equivalent to the following one:
\begin{itemize}
\item[(U1')]
There exists $C_4 >0$  such that for every $r \in (0,1)$
\begin{equation}\label{U1'}
 \int_{\R^d} \Big(r^2 \wedge |z|^2\Big) U(z) \,dz \leq C_4 r^{2-\alpha}.
\end{equation}
\end{itemize}
If the constants $C_0$ and $C_1$ are independent of $\alpha \in (\alpha_0,2)$, where $\alpha_0>0$, then so is the constant $C_4$,
and vice versa.
\end{lem}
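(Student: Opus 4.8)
The plan is to establish the two implications separately. The implication from (U1') to (U1) is immediate: on $B(0,r)$ one has $|z|^2 = r^2 \wedge |z|^2$, whence
\[
 \int_{B(0,r)} |z|^2 U(z)\,dz = \int_{B(0,r)} \bigl(r^2 \wedge |z|^2\bigr) U(z)\,dz \le \int_{\Rd} \bigl(r^2 \wedge |z|^2\bigr) U(z)\,dz \le C_4\, r^{2-\alpha},
\]
so (U1) holds with $C_1 = C_4$, and this choice inherits any $\alpha$-independence of $C_4$.

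The substantive direction is (U1) $\Rightarrow$ (U1'). For $r \in (0,1)$ I would split
\[
 \int_{\Rd} \bigl(r^2 \wedge |z|^2\bigr) U(z)\,dz = \int_{|z|\le r} |z|^2 U(z)\,dz + r^2 \int_{|z|>r} U(z)\,dz,
\]
bound the first term by $C_1 r^{2-\alpha}$ directly from (U1), and treat the second term by cutting $\{|z|>r\}$ at $|z|=1$. On $\{|z|>1\}$ the standing assumption (U0) gives $r^2\int_{|z|>1}U(z)\,dz \le C_0 r^2 \le C_0 r^{2-\alpha}$, since $0<r<1$ and $\alpha\in(0,2)$. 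On $\{r<|z|\le 1\}$ I would use a dyadic decomposition into annuli $A_j = \{2^j r < |z| \le 2^{j+1} r\}$: there $U(z) \le (2^j r)^{-2}|z|^2 U(z)$, and whenever $2^{j+1}r \le 1$ condition (U1) applied at radius $2^{j+1}r$ yields $\int_{|z|\le 2^{j+1}r}|z|^2 U(z)\,dz \le C_1 (2^{j+1}r)^{2-\alpha}$; hence, after multiplying by $r^2$, the annulus $A_j$ contributes at most $C_1\, 2^{2-\alpha}\, 2^{-j\alpha}\, r^{2-\alpha}$, and summing the geometric series over $j$ produces the factor $(1-2^{-\alpha})^{-1}$. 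The annuli $A_j$ with $2^{j+1}r\le 1$ already exhaust $\{r<|z|\le \tfrac{1}{2}\}$, and on the leftover shell $\{\tfrac{1}{2}<|z|\le 1\}$ one has $U(z)\le 4|z|^2U(z) = 4(|z|^2\wedge 1)U(z)$, so this last piece is again absorbed by (U0). Adding up, one obtains (U1') with $C_4 \le c\,(1-2^{-\alpha})^{-1}(C_0+C_1)$ for a universal constant $c$.

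For the uniformity statement: in the bound for $C_4$ the index $\alpha$ enters only through $2^{2-\alpha}\le 4$ and through $(1-2^{-\alpha})^{-1}$, which is at most $(1-2^{-\alpha_0})^{-1}$ for $\alpha\in(\alpha_0,2)$; thus $\alpha$-independence of $C_0$ and $C_1$ passes to $C_4$, and conversely trivially since $C_1 = C_4$ works there. I do not expect a conceptual obstacle; the only care needed is bookkeeping around the dyadic shell straddling $|z|=1$ — where (U1) is unavailable and one leans on (U0) — and the observation that the series factor $(1-2^{-\alpha})^{-1}$ is finite for each fixed $\alpha$ and stays bounded on $(\alpha_0,2)$, which is exactly why the uniform version is stated only for $\alpha>\alpha_0>0$.
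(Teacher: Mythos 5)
Your proposal is correct and follows essentially the same route as the paper's proof: the trivial direction via restriction to $B(0,r)$, and for (U1) $\Rightarrow$ (U1') the same dyadic decomposition of $\{r<|z|\le \tfrac12\}$ into annuli $\{2^j r<|z|\le 2^{j+1}r\}$ with (U1) applied at radius $2^{j+1}r$, the geometric series giving the factor $(1-2^{-\alpha})^{-1}$, and (U0) absorbing the region $|z|>\tfrac12$ via $U(z)\le 4(|z|^2\wedge 1)U(z)$; your bound $C_4\le c\,(1-2^{-\alpha})^{-1}(C_0+C_1)$ and the uniformity discussion for $\alpha\in(\alpha_0,2)$ match the paper's conclusion.
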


\begin{proof}
Implication (U1') $\implies$ (U1) is obvious, we may take $C_0=C_1 :=C_4$.
We assume now (U1) and we fix $0<r<1$.
We consider $n=0,1,2,\ldots$ such that $2^{n+1}r < 1$ (the set of such $n$'s is empty if $r\geq \frac{1}{2}$).
We have by (U1)
\begin{align*}
\int_{2^nr < |z| \leq 2^{n+1}r} U(z)\,dz
  &\leq
2^{-2n}r^{-2} \int_{2^nr < |z| \leq 2^{n+1}r} |z|^2 U(z)\,dz \\
&\leq 2^{-2n}r^{-2} C_1\;  2^{(n+1)(2-\alpha)} r^{2-\alpha}
 =2^{-n\alpha} 2^{2-\alpha} C_1   r^{-\alpha} .
\end{align*}
After summing over all such $n$ we obtain
\[
\int_{r<|z|\leq 1/2} U(z)\,dz \leq  \frac{2^{2-\alpha} C_1}{1-2^{-\alpha}} r^{-\alpha}.
\]
Finally
\[
\int_{1/2<|z|} U(z)\,dz \leq 
4  \int_{\R^d} (|z|^2 \wedge 1) U(z)\,dz
\leq 4C_0
\leq  4C_0  r^{-\alpha}.
\]
Combining the two inequalities above and (U1) we get (U1') with
$C_4=(\frac{2^{2-\alpha} }{1-2^{-\alpha}}+1)C_1 + 4C_0$.
\end{proof}

In next two propositions we prove the easier part of
Theorem~\ref{thm:U1L1implyAB}.

\begin{prop}\label{prop:U1impliesB}
Condition (U1) implies (B). If the constants $C_0$ and $C_1$
are independent of $\alpha \in (\alpha_0,2)$, where $\alpha_0>0$, then so is the constant in~(B).
\end{prop}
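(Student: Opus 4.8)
The plan is to construct, for given $R,\rho\in(0,1)$, an explicit cutoff function $\tau$ and verify the bound in (B) using condition (U1), or rather its equivalent reformulation (U1') from Lemma~\ref{lem:U1'}. I would take $\tau$ to be a standard smooth radial bump with $\tau\equiv 1$ on $B_R$, $\supp\tau=\overline{B_{R+\rho}}$, and the Lipschitz estimate $\|\nabla\tau\|_\infty\leq c\rho^{-1}$ (such a function exists by mollifying the obvious piecewise-linear profile). The point is then to estimate, uniformly in $x\in\R^d$,
\[
I(x):=\il_{\R^d}\big(\tau(y)-\tau(x)\big)^2 k(x,y)\,dy \leq \il_{\R^d}\big(\tau(y)-\tau(x)\big)^2 U(x-y)\,dy,
\]
using $k(x,y)\leq U(x-y)$ from (K) and symmetry of $U$.

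The key step is the pointwise bound $\big(\tau(y)-\tau(x)\big)^2\leq \big(\rho^{-2}|x-y|^2\big)\wedge 4$, which follows from combining the mean value estimate $|\tau(y)-\tau(x)|\leq \|\nabla\tau\|_\infty|x-y|\leq c\rho^{-1}|x-y|$ with the trivial bound $|\tau(y)-\tau(x)|\leq 2$. Substituting the change of variables $z=x-y$, this gives
\[
I(x)\leq c\il_{\R^d}\Big(\rho^{-2}|z|^2\wedge 1\Big)U(z)\,dz = c\rho^{-2}\il_{\R^d}\Big(\rho^2\wedge|z|^2\Big)U(z)\,dz,
\]
and now (U1'), applied with $r=\rho\in(0,1)$, bounds the last integral by $C_4\rho^{2-\alpha}$, yielding $I(x)\leq cC_4\rho^{-\alpha}$ uniformly in $x$, which is exactly (B). Since Lemma~\ref{lem:U1'} provides (U1') with $C_4$ depending only on $C_0,C_1$ (and independent of $\alpha$ when these are), and the constant $c$ from the cutoff construction depends only on $d$, the constant in (B) inherits the desired $\alpha$-independence.

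I do not expect any serious obstacle here; this is the "easier part" of Theorem~\ref{thm:U1L1implyAB}. The only mild point to be careful about is that the estimate must be uniform over all $x\in\R^d$, including $x$ far from $\supp\tau$ — but for such $x$ the integrand $\big(\tau(y)-\tau(x)\big)^2=\tau(y)^2$ is supported where $|x-y|$ is comparable to or larger than $\dist(x,\supp\tau)$, and the bound $\big(\rho^{-2}|z|^2\wedge 1\big)U(z)$ still controls it; in fact the worst case is $x$ inside or near the transition annulus $B_{R+\rho}\setminus B_R$, where the argument above applies directly. One should also note that (U0) is implicitly needed to ensure the integral $\int(|z|^2\wedge 1)U(z)\,dz$ is finite so that all manipulations are legitimate, but this is part of the standing assumption (K)–(U0).
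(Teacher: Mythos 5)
Your proposal is correct and follows essentially the same route as the paper: choose a Lipschitz cutoff with $\|\nabla\tau\|_\infty\lesssim\rho^{-1}$, bound $\big(\tau(y)-\tau(x)\big)^2$ by $\big(c\rho^{-2}|x-y|^2\big)\wedge 1$ (up to constants), substitute $z=x-y$, and invoke the equivalent condition (U1') from Lemma~\ref{lem:U1'} to get the uniform bound $c\,C_4\,\rho^{-\alpha}$. The only differences are cosmetic constants (the paper uses the Lipschitz bound $2\rho^{-1}$ and applies (U1') with $r=\rho/2$, obtaining $2^\alpha C_4\rho^{-\alpha}$).
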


\begin{proof}
Let $\tau\in C^\infty(\R^d)$ be a function satisfying $\supp(\tau)=\overline{B_{R+\rho}}$,
$\tau\equiv 1$ on $B_R$, $0\leq \tau \leq 1$ on $\R^d$ and $|\tau(x)-\tau(y)| \leq 2\rho^{-1}|x-y|$ for all $x$, $y\in \R^d$.
In particular, we have then $|\tau(x)-\tau(y)| \leq (2\rho^{-1}|x-y|) \wedge 1$.
For every $x\in \R^d$ we estimate, using Lemma~\ref{lem:U1'}
\begin{align*}
\int_{\R^d} (\tau(x)-\tau(y))^2 k(x,y)\, dy
 &\leq
\int_{\R^d} \left( (4\rho^{-2}|x-y|^2) \wedge 1 \right) U(x-y)\, dy\\
 &= 4\rho^{-2}
\int_{\R^d} (|z|^2 \wedge \frac{\rho^2}{4}) U(z)\, dz\\
&\leq 2^\alpha C_4 \rho^{-\alpha}.\qedhere
\end{align*}
\end{proof}

In the proof of the next proposition we will need the following fact
\cite{MR1331981}. Its elementary proof may be found in \cite{DiNezzaPalatucciValdinoci},
however one has to go through it and see that the constants do not depend
on $\alpha$, provided one has the factor $\alpha(2-\alpha)$ in front of the
Gagliardo norm \eqref{eq:Ead}, \eqref{eq:Sobolevnorm}.

\begin{fact}\label{fact:extension}
Let $D\subset \R^d$ be a bounded Lipschitz domain, and let $0<\alpha<2$.
Then there exists a constant $c=c(d, D)$ (independent of $\alpha$)
and an extension operator $E:H^{\alpha/2}(D) \to H^{\alpha/2}(\R^d)$ with
norm $\|E\| \leq c$.
\end{fact}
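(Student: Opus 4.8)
The plan is to carry out the classical three-step construction of a Sobolev extension operator (as in \cite{DiNezzaPalatucciValdinoci}) and to check at each step that the constants can be chosen independently of $\alpha$; for one of the steps this hinges on the normalization factor $\alpha(2-\alpha)$ built into $\E^\alpha_D$ in \eqref{eq:Ead}. Throughout write $\bar x=(x',-x_d)$ for the reflection of $x=(x',x_d)\in\R^d$ in the hyperplane $\{x_d=0\}$, and assume $\lambda\ge1$. \emph{Step 1 (reflection across a hyperplane).} For $u$ on $\R^d_+=\{x_d>0\}$ I set $Eu(x)=u(x)$ when $x_d\ge0$ and $Eu(x)=u(\bar x)$ when $x_d<0$, so that $\|Eu\|_{L^2(\R^d)}^2=2\|u\|_{L^2(\R^d_+)}^2$. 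To bound $\E^\alpha_{\R^d}(Eu,Eu)$ I split the double integral by the signs of $x_d$ and $y_d$: the two ``equal-sign'' contributions each equal $\E^\alpha_{\R^d_+}(u,u)$ (the ``both-negative'' one after the isometric substitution $(x,y)\mapsto(\bar x,\bar y)$), while in an ``opposite-sign'' piece, say $x_d>0>y_d$, one has $Eu(x)-Eu(y)=u(x)-u(\bar y)$ with $\bar y\in\R^d_+$ and the elementary inequality $|x-\bar y|\le|x-y|$, hence $|x-y|^{-d-\alpha}\le|x-\bar y|^{-d-\alpha}$, so the substitution $y\mapsto\bar y$ bounds this piece by $\E^\alpha_{\R^d_+}(u,u)$ as well. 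Thus $\|Eu\|_{H^{\alpha/2}(\R^d)}\le c\,\|u\|_{H^{\alpha/2}(\R^d_+)}$ with $c$ absolute.

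\emph{Step 2 (bi-Lipschitz invariance).} If $T\colon\R^d\to\R^d$ is bi-Lipschitz with $\lambda^{-1}|x-y|\le|Tx-Ty|\le\lambda|x-y|$, changing variables $\xi=Tx$, $\eta=Ty$ in $\E^\alpha_{\R^d}(u\circ T,u\circ T)$, bounding the Jacobian factors by $\lambda^{\pm d}$, and using $|x-y|^{-d-\alpha}\le\lambda^{d+\alpha}|Tx-Ty|^{-d-\alpha}\le\lambda^{d+2}|Tx-Ty|^{-d-\alpha}$ (here $\alpha<2$ removes the $\alpha$ from the exponent) gives $\E^\alpha_{\R^d}(u\circ T,u\circ T)\le\lambda^{3d+2}\,\E^\alpha_{\R^d}(u,u)$, with the $L^2$ norm controlled similarly and the constant depending only on $d$ and $\lambda$. \emph{Step 3 (multiplication by a cutoff).} For $\phi\in C_c^\infty(\R^d)$ and any open $D$, the product identity $\phi(x)u(x)-\phi(y)u(y)=\phi(x)(u(x)-u(y))+u(y)(\phi(x)-\phi(y))$ together with $|\phi(x)-\phi(y)|^2\le(\|\nabla\phi\|_\infty^2|x-y|^2)\wedge 4\|\phi\|_\infty^2$ gives
\[
\E^\alpha_D(\phi u,\phi u)\le 2\|\phi\|_\infty^2\,\E^\alpha_D(u,u)+2\alpha(2-\alpha)\,I_\alpha(\phi)\,\|u\|_{L^2(D)}^2\,,
\]
where a computation in polar coordinates gives $I_\alpha(\phi)=\int_{\R^d}\bigl((\|\nabla\phi\|_\infty^2|z|^2)\wedge 4\|\phi\|_\infty^2\bigr)|z|^{-d-\alpha}\,dz=\tfrac{2|\Sd|}{\alpha(2-\alpha)}\,\|\nabla\phi\|_\infty^{\alpha}(2\|\phi\|_\infty)^{2-\alpha}$; hence $2\alpha(2-\alpha)I_\alpha(\phi)=4|\Sd|\,\|\nabla\phi\|_\infty^{\alpha}(2\|\phi\|_\infty)^{2-\alpha}$ is bounded in terms of $d$ and $\phi$ alone, uniformly in $\alpha\in(0,2)$, and together with $\|\phi u\|_{L^2}\le\|\phi\|_\infty\|u\|_{L^2}$ this yields $\|\phi u\|_{H^{\alpha/2}}\le c(\phi)\,\|u\|_{H^{\alpha/2}}$ with $c(\phi)$ independent of $\alpha$.

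\emph{Assembly.} Since $D$ is a bounded Lipschitz domain I cover $\partial D$ by finitely many balls $B^{(1)},\dots,B^{(N)}$ in each of which, after a rotation, $D$ coincides with the region lying above the graph of a Lipschitz function, pick an open set $B^{(0)}$ with $\overline{B^{(0)}}\subset D$ and $\overline D\subset\bigcup_{j=0}^N B^{(j)}$, and fix a smooth partition of unity $\{\phi_j\}_{j=0}^N$ subordinate to this cover with $\sum_{j}\phi_j\equiv1$ on a neighbourhood of $\overline D$. For $j\ge1$, flattening $\partial D\cap B^{(j)}$ by a bi-Lipschitz map (Step 2), reflecting (Step 1), and mapping back (Step 2) produces an extension $E_j(\phi_j u)\in H^{\alpha/2}(\R^d)$ of $\phi_j u$ whose norm is controlled by that of $\phi_j u$; the function $\phi_0 u$ is extended by zero. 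Then $Eu:=\sum_{j=0}^N E_j(\phi_j u)$ restricts to $u$ on $D$, and by Step 3 and the finiteness of the cover its operator norm depends only on $d$, $N$, and the Lipschitz constants of the charts, hence only on $d$ and $D$.

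The step I expect to be the main obstacle is Step 3: a careless use of the product rule produces the term $\alpha(2-\alpha)\int_D\int_D|u(y)|^2|\phi(x)-\phi(y)|^2|x-y|^{-d-\alpha}\,dx\,dy$, whose inner integral over $x$ behaves like $(\alpha(2-\alpha))^{-1}$ as $\alpha\to0$ and as $\alpha\to2$, so that without the normalization the constant would degenerate at both endpoints; it is precisely the prefactor $\alpha(2-\alpha)$ in \eqref{eq:Ead} that cancels it. In Steps 1 and 2 the normalization is irrelevant; there the only points requiring care are the two geometric inequalities $|x-\bar y|\le|x-y|$ and $|x-y|^{-d-\alpha}\le\lambda^{d+2}|Tx-Ty|^{-d-\alpha}$, in the latter of which $\alpha<2$ is used to strip the exponent's $\alpha$-dependence.
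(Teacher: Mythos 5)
Your proposal is correct and follows essentially the route the paper itself indicates: Fact~\ref{fact:extension} is quoted from \cite{MR1331981} with the remark that one should run through the elementary construction of \cite{DiNezzaPalatucciValdinoci} and verify that all constants are uniform in $\alpha$ once the normalization $\alpha(2-\alpha)$ from \eqref{eq:Ead} is in place, and your Steps 1--3 (reflection, bi-Lipschitz charts, cutoff multiplication with the explicit computation $2\alpha(2-\alpha)I_\alpha(\phi)=4|\Sd|\,\|\nabla\phi\|_\infty^{\alpha}(2\|\phi\|_\infty)^{2-\alpha}$) carry out precisely that verification, correctly identifying the cutoff step as the one where the prefactor is needed.
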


Furthermore, we will need the following Poincar\'e inequality \cite{Ponce04}.
\begin{fact}\label{fact:Poincare}
Let $D\subset \R^d$ be a bounded Lipschitz domain, and let $0<\alpha_0<\alpha<2$.
Then there exists a constant $c=c(d, \alpha_0, D)$ such that
\begin{equation}
\|u - \frac{1}{|D|}\int_D u\, dx \|_{L^2(D)}^2 \leq c \E^\alpha_D(u,u),\quad u\in H^{\alpha/2}(D).
\end{equation}
\end{fact}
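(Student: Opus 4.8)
The plan is to reduce the estimate, via the elementary identity
\begin{equation}\label{eq:poincare-identity}
\|u-u_D\|_{L^2(D)}^2 = \frac{1}{2|D|}\iint_{D\times D}\big(u(x)-u(y)\big)^2\,dx\,dy, \qquad u_D:=\frac{1}{|D|}\int_D u\,dx,
\end{equation}
to bounding the right-hand side by $c\,\E^\alpha_D(u,u)$, and then to argue by contradiction so that the ranges ``$\alpha$ bounded away from $2$'' and ``$\alpha$ near $2$'' can be handled simultaneously. Concretely, if the inequality failed there would be $\alpha_n \in (\alpha_0,2)$ and $u_n \in H^{\alpha_n/2}(D)$ with $\|u_n-(u_n)_D\|_{L^2(D)}^2 > n\,\E^{\alpha_n}_D(u_n,u_n)$; normalising, $v_n := \big(u_n-(u_n)_D\big)\big/\|u_n-(u_n)_D\|_{L^2(D)}$ would satisfy $\|v_n\|_{L^2(D)}=1$, $(v_n)_D=0$, $\E^{\alpha_n}_D(v_n,v_n) < 1/n$, and after passing to a subsequence $\alpha_n \to \alpha_\ast \in [\alpha_0,2]$.

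For the case $\alpha_\ast < 2$ I would use only that on $D\times D$ one has $|x-y|^{-d-\alpha} \ge (\max\{1,\diam D\})^{-d-2}$, so that
\[
\iint_{D\times D}\big(u(x)-u(y)\big)^2\,dx\,dy \le \frac{(\max\{1,\diam D\})^{d+2}}{\alpha(2-\alpha)}\,\E^\alpha_D(u,u);
\]
since $\alpha_n(2-\alpha_n)$ stays bounded below when $\alpha_n \to \alpha_\ast < 2$, \eqref{eq:poincare-identity} would give $1 = \|v_n\|_{L^2(D)}^2 \le C\,\E^{\alpha_n}_D(v_n,v_n) \to 0$, absurd. (This crude bound by itself proves the proposition whenever $\alpha$ is confined to a compact subinterval of $(0,2)$, using only boundedness of $D$, but it degenerates as $\alpha\to2$ because of the normalising factor $\alpha(2-\alpha)$ in \eqref{eq:Ead}.)

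The remaining case $\alpha_n \to 2$ is where the real work lies. Here I would rewrite $\E^{\alpha_n}_D(v_n,v_n)$ against the radial mollifiers $\rho_n(h):=c_n\,\alpha_n(2-\alpha_n)\,|h|^{\,2-d-\alpha_n}\,\ind_{\{|h|\le\diam D\}}$, with $c_n$ normalising so that $\int_{\R^d}\rho_n=1$; an elementary computation shows $c_n$ stays bounded and that the $\rho_n$ concentrate at the origin, and that $\iint_{D\times D}\frac{|v_n(x)-v_n(y)|^2}{|x-y|^2}\rho_n(x-y)\,dx\,dy = c_n\,\E^{\alpha_n}_D(v_n,v_n)\to0$. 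Then the Bourgain--Brezis--Mironescu compactness theorem (for which the Lipschitz, i.e.\ extension, property of $D$ is used --- one may pass to a ball via Fact~\ref{fact:extension}) yields, along a subsequence, $v_n\to v$ in $L^2(D)$ with $\|v\|_{L^2(D)}=1$, $v_D=0$; and the lower-semicontinuity half of the same theory gives $\kappa_d\int_D|\nabla v|^2\,dx \le \liminf_n\iint_{D\times D}\frac{|v_n(x)-v_n(y)|^2}{|x-y|^2}\rho_n(x-y)\,dx\,dy = 0$ for some $\kappa_d>0$, so $v$ is constant on the connected domain $D$, hence $v\equiv0$ by $v_D=0$ --- contradicting $\|v\|_{L^2(D)}=1$. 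Both cases being impossible, the estimate holds with a constant depending only on $d$, $\alpha_0$ and $D$.

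The main obstacle is this last case: the elementary estimate is worthless as $\alpha\to2$, so one is forced to invoke the Bourgain--Brezis--Mironescu compactness and lower-semicontinuity results (see \cite{Ponce04}), and it is precisely there that connectedness and the extension property of $D$ become essential, the latter to upgrade $L^2_{\mathrm{loc}}$-compactness to compactness in $L^2(D)$.
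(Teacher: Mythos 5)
Your argument is correct, and in fact the paper does not prove Fact~\ref{fact:Poincare} at all but quotes it from \cite{Ponce04}; your scheme --- normalising, arguing by contradiction, using the crude pointwise bound $|x-y|^{-d-\alpha}\geq c(D)$ when $\alpha$ stays away from $2$, and invoking the Bourgain--Brezis--Mironescu compactness and lower-semicontinuity theorems with the concentrating mollifiers $\rho_n$ when $\alpha_n\to 2$ --- is precisely the argument behind that reference. The only caveat is that your proof is not self-contained (it leans on the BBM/Ponce compactness theory for bounded Lipschitz, hence extension, domains and on connectedness of $D$), but that matches the status the statement has in the paper, namely a quoted Fact.
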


Now we are ready to formulate and prove the following comparability result.

\begin{prop}\label{prop:upper}
Assume (U0), (U1) and  let $0<\alpha_0<\alpha<2$.
If $D\subset \R^d$ is a bounded Lipschitz domain, then
there exists a constant $c=c(\alpha_0,d,C_1, C_0, D)$ such that
\begin{equation}\label{Dupper}
 \E^k_D(u,u) \leq c \E^\alpha_D(u,u), \quad u\in H^{\alpha/2}(D).
\end{equation}
The constant $c$ may be chosen such that (\ref{Dupper}) holds for all balls
$D=B_r$ of radius $r<1$, and for all $\alpha\in (\alpha_0,2)$.
\end{prop}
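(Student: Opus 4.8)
The goal is to bound $\E^k_D(u,u)$ by $c\,\E^\alpha_D(u,u)$. Since $k(x,y)\le U(x-y)$, it suffices to prove
\[
\iil_{DD} (u(y)-u(x))^2 U(x-y)\,dx\,dy \le c\, \E^\alpha_D(u,u),\quad u\in H^{\alpha/2}(D).
\]
The plan is to split the integral according to whether $|x-y|$ is small or large. First I would fix a threshold, say $|x-y|\le \rho_0$ for some fixed $\rho_0\in(0,1)$ depending only on $\diam D$; on the diagonal region $\{|x-y|\le\rho_0\}$ I would dyadically decompose $|x-y|\in (2^{-j-1}\rho_0, 2^{-j}\rho_0]$, estimate $\int_{2^{-j-1}\rho_0<|z|\le 2^{-j}\rho_0} U(z)\,dz$ using the equivalent form (U1$'$) from Lemma~\ref{lem:U1'} (this gives a bound of order $(2^{-j}\rho_0)^{-\alpha}$, exactly as in the proof of Lemma~\ref{lem:U1'}), and compare term by term with the corresponding piece of the Gagliardo seminorm, where $|x-y|^{-d-\alpha}\asymp (2^{-j}\rho_0)^{-d-\alpha}$ on that annulus. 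Summing the geometric series in $j$ — which converges with a constant controlled uniformly for $\alpha\ge\alpha_0$ because the ratio is $2^{-\alpha}\le 2^{-\alpha_0}$ — yields the bound on the near-diagonal part, with the factor $\alpha(2-\alpha)$ built into $\E^\alpha_D$ absorbing the $(2-\alpha)$ that would otherwise appear.

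For the far-diagonal region $\{|x-y|>\rho_0\}\cap(D\times D)$, the kernel is harmless: $\int_{|z|>\rho_0} U(z)\,dz \le \rho_0^{-2}\int(|z|^2\wedge 1)U(z)\,dz \le C_0\rho_0^{-2}$ by (U0) (here I use $\rho_0<1$). Hence
\[
\iil_{DD,\,|x-y|>\rho_0} (u(y)-u(x))^2 U(x-y)\,dx\,dy
\le 2 C_0 \rho_0^{-2} \Big(\|u\|_{L^2(D)}^2 \cdot 2 \Big),
\]
so this term is controlled by $\|u\|_{L^2(D)}^2$ up to constants depending on $C_0,\rho_0$. It therefore remains to bound $\|u\|_{L^2(D)}^2$ by $\E^\alpha_D(u,u)$, which is where the Poincaré inequality enters. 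By Fact~\ref{fact:Poincare}, $\|u - \fint_D u\|_{L^2(D)}^2 \le c\,\E^\alpha_D(u,u)$; the only gap is the mean value $|\fint_D u|$, which cannot be bounded by the seminorm alone. The standard fix is to replace $u$ by $u - \fint_D u$ throughout — the left-hand side $\E^k_D(u,u)$ and the right-hand side $\E^\alpha_D(u,u)$ are both unchanged under adding constants — so without loss of generality $\fint_D u = 0$, and then $\|u\|_{L^2(D)}^2 \le c\,\E^\alpha_D(u,u)$ directly.

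The main obstacle is tracking the $\alpha$-dependence of all constants. The near-diagonal dyadic sum must be shown to converge with a bound uniform in $\alpha\in(\alpha_0,2)$, which works because the summation ratio is $2^{-\alpha}$ and stays bounded away from $1$; and (U1$'$), (U0), and the Poincaré constant from Fact~\ref{fact:Poincare} must all have the stated uniformity — the first two by Lemma~\ref{lem:U1'} and hypothesis, the last by the statement of Fact~\ref{fact:Poincare}. One also needs the extension operator (Fact~\ref{fact:extension}) only insofar as $H^{\alpha/2}(D)$ functions are genuinely restrictions of global ones, ensuring the seminorm $\E^\alpha_D(u,u)$ is finite and the manipulations are legitimate. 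Finally, to get the uniform statement for all balls $D=B_r$ with $r<1$ rather than a fixed $D$, I would run the argument with $\rho_0 \asymp r$ and use the scale invariance of both sides under $u\mapsto u(r\cdot)$: $\E^\alpha_{B_r}$ and $\E^k_{B_r}$ transform the same way under dilation (up to the $U$ versus $|z|^{-d-\alpha}$ discrepancy, which is exactly absorbed by (U1)), so the constant obtained for $B_1$ transfers to all $B_r$, $r<1$.
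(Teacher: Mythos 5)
Your treatment of the far-off-diagonal region (bounding the contribution of $|x-y|>\rho_0$ by a multiple of $C_0\rho_0^{-2}\|u\|_{L^2(D)}^2$, then removing the $L^2$ term by subtracting the mean — under which both $\E^k_D$ and $\E^\alpha_D$ are invariant — and invoking Fact~\ref{fact:Poincare}) is sound and matches the paper, as does the scaling argument for the balls $B_r$. The genuine gap is in the near-diagonal step. On an annulus $A_j=\{2^{-j-1}\rho_0<|z|\le 2^{-j}\rho_0\}$, condition (U1)/(U1') only controls $U$ in an integrated sense; there is no pointwise bound $U(z)\lesssim(2-\alpha)|z|^{-d-\alpha}$, so $U$ may be a spike concentrated near a single $z_0\in A_j$. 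The corresponding piece of $\E^k_D$ is then essentially $\big(\int_{A_j}U\big)\cdot\omega(z_0)$ with $\omega(z_0)=\int (u(x+z_0)-u(x))^2\,dx$, and your "term by term" comparison with the Gagliardo piece on the same annulus would require the single-shift modulus $\omega(z_0)$ to be dominated by its average over $z\in A_j$ \emph{with an extra factor $(2-\alpha)$} (because $\E^\alpha_D$ carries the normalization $\alpha(2-\alpha)$). The averaging statement itself is not elementary — it is proved via Plancherel and therefore needs $u$ extended to $\R^d$, i.e.\ Fact~\ref{fact:extension} in an essential way, not merely to "make the manipulations legitimate" — and the version with the factor $(2-\alpha)$ is simply false (test a smooth $u$, for which $\omega(z)\asymp|z|^2$ on the annulus with no $(2-\alpha)$ to spare).

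Moreover the announced geometric series does not materialize. The best scale-wise bound is $\omega(z)\lesssim |z|^{\alpha}\int|\xi|^\alpha|\widehat{Eu}(\xi)|^2\,d\xi$, and pairing it with $\int_{A_j}U\lesssim C_1(2^{-j}\rho_0)^{-\alpha}$ gives terms of constant size in $j$: summing over scales either diverges, or — if one compares instead against $\iint_{|x-y|\le\rho_0}(u(x)-u(y))^2|x-y|^{-d-\alpha}\,dy\,dx$ — yields only $\tfrac{c}{\alpha(2-\alpha)}\E^\alpha_D(u,u)$, i.e.\ a constant blowing up as $\alpha\to2$, which destroys exactly the uniformity in $\alpha\in(\alpha_0,2)$ that the proposition asserts. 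The paper circumvents this by summing over scales \emph{inside} the Fourier multiplier: after extending $u$, Plancherel gives $\E^k_D(u,u)\le\int\big(\int 4\sin^2(\xi\cdot z/2)\,U(z)\,dz\big)|\widehat{Eu}(\xi)|^2\,d\xi$, and (U1') together with (U0) bounds the multiplier by $c(|\xi|^\alpha+1)$ with $\alpha$-uniform constants; the $2^{-\alpha}$-geometric series lives in Lemma~\ref{lem:U1'}, at the level of integrating $U$, not at the level of distributing the oscillation of $u$ across annuli. Your argument needs this device (or an equivalent one) to close; as written, the near-diagonal estimate is both unjustified and quantitatively insufficient.
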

\begin{proof}
By $E$ we denote the extension operator from $H^{\alpha/2}(D)$ to $H^{\alpha/2}(\R^d)$, see Fact~\ref{fact:extension}.
By subtracting  a constant, we may and do assume that $\int_D u\,dx = 0$.
We have by Plancherel formula and Fubini theorem
{\allowdisplaybreaks
\begin{align}
\E^k_{D}(u,u) &\leq
\int_D\!\int_{D-y} (u(y+z)-u(y))^2 U(z)\,dz\,dy \label{Plproof}\\
&\leq
\int_D\!\int_{B(0,\diam D)} (Eu(y+z)-Eu(y))^2 U(z)\,dz\,dy \nonumber\\
&\leq
\int_{B(0,\diam D)}\!\int_{\R^d} (Eu(y+z)-Eu(y))^2 \,dy \, U(z)\,dz \nonumber\\
 &=\int_{\R^d} \left( \int_{B(0,\diam D)} |e^{i\xi\cdot z}-1|^2 U(z)\,dz \right) |\widehat{Eu}(\xi)|^2\,d\xi  \nonumber\\
 &=\int_{\R^d} \left( \int_{B(0,\diam D)}4\sin^2\Big(\frac{\xi\cdot z}{2}\Big)  U(z)\,dz \right) |\widehat{Eu}(\xi)|^2\,d\xi.\label{Plproof2}
\end{align}
}
For  $|\xi| > 2$ we obtain, using (U1')
\begin{equation}\label{largexi} 
 \int 4\sin^2\Big(\frac{\xi\cdot z}{2}\Big) U(z)\,dz \leq
|\xi|^2 \int (|z|^2 \wedge 4|\xi|^{-2}) U(z)\,dz 
\leq 4C_4 |\xi|^\alpha,
\end{equation}
and for $|\xi| \leq 2$
\begin{align*}
 \int 4\sin^2\Big(\frac{\xi\cdot z}{2}\Big) U(z)\,dz &\leq
4 \int \left(\Big|\frac{\xi\cdot z}{2}\Big|^2 \wedge 1\right) U(z)\,dz 
\leq 4C_0.
\end{align*}
Thus
\begin{align}
\E^k_{D}(u,u) &\leq
c' \int_{\R^d} \left(|\xi|^\alpha + 1  \right) |\widehat{Eu}(\xi)|^2\,d\xi\nonumber\\
 &\leq c' \|Eu\|_{H^{\alpha/2}(\R^d)}^2 \leq c \|u\|_{H^{\alpha/2}(D)}^2\nonumber\\
&=c (\E^\alpha_D(u,u) + \|u\|_{L^2(D)}^2)\label{EkdEa}
\end{align}
with  $c=c(d,C_4, C_0, D)$.
Since $\int_D u\,dx=0$, we have by Fact~\ref{fact:Poincare}
\begin{align*}
\E^\alpha_D(u,u) &\geq c(\alpha_0, d, D)  \int_D u^2(x)\,dx.
\end{align*}
and this together with (\ref{EkdEa}) proves (\ref{Dupper}).

By scaling, the last assertion of the Theorem is satisfied with a~constant $c=c(\alpha_0,d, C_4, C_0, B_1)$.
\end{proof}

The proof of the remaining part of Theorem~\ref{thm:U1L1implyAB}, i.e. the
inequality
'$\geq$' in (A) under the assumption (L1), is more difficult. 
We will need the following two technical lemmata.

\begin{lem}\label{lem:Whitney}
Let $0<\alpha_0<\alpha<2$.
We let $\eta\in(0,1)$ and for a ball $B=B(x,r)$ we denote $B^* = B(x,r/\eta)$.
Suppose that for some $c_k$, $r_0>0$ and all $0<r<r_0$ we have
\[
 \E^k_{B^*}(u,u) \geq c_k \E^\alpha_{B}(u,u),
\]
for every function $u$ and every ball $B$ of radius $\eta r$.
Then there exists a constant $c=c(d,\alpha_0,\eta)$,
such that for every ball $B$ of radius $r<r_0$ and every function $u$
\[
 \E^k_{B}(u,u) \geq c c_k \E^\alpha_{B}(u,u).
\]
\end{lem}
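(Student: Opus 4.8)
The plan is to cover the ball $B = B(x,r)$ by a family of smaller balls of radius $\eta r'$ (for a suitable $r' \leq r$) in a Whitney-type fashion, apply the hypothesis on each such small ball, and sum up. Concretely, I would fix a ball $B = B(x,r)$ with $r < r_0$ and choose a finite collection of balls $\{Q_i = B(x_i, \eta r)\}_{i\in I}$ with centers $x_i \in B$ such that the balls $Q_i$ cover $B$, while the dilated balls $Q_i^* = B(x_i, r)$ all remain inside a fixed dilate of $B$ — in fact $Q_i^* \subset B(x, 2r)$, but more importantly each $Q_i^* \subset B$ would be too much to ask, so instead I take the small balls of radius $\eta r/2$ centered in $B$, so that $Q_i^* = B(x_i, r/2) \subset B(x, 3r/2)$; one then simply replaces $B$ at the outset by a ball of radius $2r/3$ and absorbs the constant. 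The number $N = N(d,\eta)$ of balls needed can be taken to depend only on $d$ and $\eta$ by a standard volume/packing argument, and one may further arrange that no point of $B$ lies in more than $N'=N'(d,\eta)$ of the $Q_i$ (bounded overlap).

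The key computation is then the following. For each $i$, the hypothesis applied to the ball $Q_i$ (radius $\eta r'$ with $r' = 2r/3 < r_0$, say) gives
\[
 \E^k_{Q_i^*}(u,u) \geq c_k\, \E^\alpha_{Q_i}(u,u).
\]
Since each $Q_i^* \subset B$, the integrand of $\E^k$ being nonnegative yields $\E^k_B(u,u) \geq \E^k_{Q_i^*}(u,u)$, hence $\E^k_B(u,u) \geq c_k \E^\alpha_{Q_i}(u,u)$ for every $i$. Summing over $i \in I$ and using bounded overlap of the $Q_i$,
\[
 N\, \E^k_B(u,u) \;\geq\; c_k \sum_{i\in I} \E^\alpha_{Q_i}(u,u) \;\geq\; \frac{c_k}{N'}\, \E^\alpha_{\bigcup_i Q_i}(u,u) \;\geq\; \frac{c_k}{N'}\, \E^\alpha_{B(x,r')}(u,u),
\]
where in the middle step one uses that $\iint_{Q_i Q_i}(\cdots) \geq$ is controlled below by restricting the double integral over $\bigcup Q_i \times \bigcup Q_i$ to the "diagonal-ish" part covered by the $Q_i\times Q_i$, again invoking the bounded-overlap property of the covering to pass from $\sum_i \iint_{Q_i Q_i}$ down to a single $\iint$ over the union. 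Finally, undoing the reduction $r' = 2r/3 \mapsto r$ (i.e. starting from a slightly larger ball) and collecting constants gives $\E^k_B(u,u) \geq c\, c_k\, \E^\alpha_B(u,u)$ with $c = c(d,\alpha_0,\eta)$; the dependence on $\alpha_0$ enters only through the factor $\alpha(2-\alpha)$ in $\E^\alpha$, which is bounded below on $[\alpha_0,2]$, so in fact $c$ here need not even depend on $\alpha_0$, but stating it with $\alpha_0$ does no harm.

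The main obstacle is the passage from $\sum_{i} \E^\alpha_{Q_i}(u,u)$ back to $\E^\alpha_{B}(u,u)$: a single term $\E^\alpha_{Q_i}(u,u)$ only sees pairs $(x,y)$ with both $x,y \in Q_i$, whereas $\E^\alpha_B(u,u)$ involves all pairs in $B\times B$, including pairs $(x,y)$ that lie in no common $Q_i$. This is where the Whitney structure matters: one must choose the covering so that every pair $x,y \in B(x,r')$ with $|x-y|$ small — say $|x-y| < \eta r'/2$ — lies in a common $Q_i$, which forces the radius of the $Q_i$ to be comparable to $\eta r'$ and dictates $N$; and pairs with $|x-y| \geq \eta r' /2$ contribute, via $|x-y|^{-d-\alpha} \lesssim (\eta r')^{-d-\alpha}$ together with $\|u\|_{L^2(B)}^2$, a term that must be absorbed. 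The cleanest route avoiding this last absorption is to keep only the short-range pairs in the lower bound — i.e. to prove $\E^k_B(u,u) \gtrsim c_k \iint_{B(x,r')\times B(x,r'),\,|x-y|<\eta r'/2}(u(y)-u(x))^2 |x-y|^{-d-\alpha}$ — and then note that discarding the long-range pairs from $\E^\alpha_B$ only costs a factor $c(d)$, because for a ball the short-range part of the Gagliardo energy already dominates a fixed fraction of the whole (this is a standard fact: splitting $\E^\alpha_{B_r}$ at the threshold $|x-y| = \eta r/2$, the far part is comparable to the near part up to dimensional constants). Handling this threshold bookkeeping carefully, uniformly in $\alpha \in (\alpha_0,2)$, is the only real work.
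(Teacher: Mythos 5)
Your argument has a genuine gap, and it sits exactly at the point you try to wave away. Because the step $\E^k_{B}(u,u)\geq \E^k_{Q_i^*}(u,u)$ requires $Q_i^*\subset B$, a covering by balls of a \emph{fixed} radius comparable to $\eta r$ cannot reach the boundary of $B$: you are forced to center the $Q_i$ in a strictly smaller concentric ball, and what your sketch then delivers is only $\E^k_{B(x,r)}(u,u)\geq c\,c_k\,\E^\alpha_{B(x,\lambda r)}(u,u)$ for some fixed $\lambda<1$ (your $\lambda=2/3$). The concluding claim that one may ``replace $B$ at the outset by a ball of radius $2r/3$ and absorb the constant'' is false: $\E^\alpha_{B(x,\lambda r)}(u,u)$ does not control $\E^\alpha_{B(x,r)}(u,u)$ by any constant, as one sees by taking $u$ supported (and oscillating) in the annulus $B(x,r)\setminus B(x,\lambda r)$, where the left-hand side vanishes. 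Worse, an estimate with a shrunken inner ball is precisely what the hypothesis already gives (with ratio $\eta$), and precisely the form of the bound Proposition~\ref{prop:homo} has \emph{before} invoking this lemma; the whole purpose of the lemma is to remove that shrinkage, so your reduction begs the question. Pairs $x,y$ lying near $\partial B$ are simply never seen by your covering.

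The paper's proof handles the boundary with a genuine Whitney structure: the balls of the family $\mathcal{B}$ have radii proportional to the distance to $\partial D$, so that all dilates $B^*$ remain inside $D$, and every pair $x,y\in D$ with $|x-y|<c\,\dist(x,D^c)$ lies in a common ball of the family. Summing, with the finite-overlap property of the $B^*$, yields a lower bound by the Gagliardo energy restricted to $\{|x-y|<c\,\dist(x,D^c)\}$, and the genuinely nontrivial step — recovering the full $\E^\alpha_D(u,u)$ from this distance-weighted truncation, with a constant uniform over balls (by scaling) and over $\alpha\in(\alpha_0,2)$ — is imported from \cite[Proposition~5 and proof of Theorem~1]{Dyda2}. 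Your ``standard fact'' (that the short-range part at a threshold $\sim r$ dominates the whole seminorm on a ball) is a relative of that comparability theorem, and citing or proving some such result is indeed unavoidable; but in your scheme you never obtain the uniform-threshold truncated energy over all of $B$ — only over the inner ball — so the fact cannot be applied where you need it. A minor further point: the $\alpha$-uniformity is not a consequence of the factor $\alpha(2-\alpha)$ in $\E^\alpha$ (that factor appears on both sides and cancels); it comes from the chaining constant in the comparability result, which is where the dependence on $\alpha_0$ (degeneration as $\alpha\to 0$) actually resides.
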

\begin{proof}
Fix some  $0<r<r_0$ and a ball $D$ of radius $r$.
We take $\mathcal{B}$ to be a family of balls with the following properties.
\begin{itemize}
\item[(i)]
For some $c=c(d)$ and any $x,y\in D$, if $|x-y|<c\dist(x,D^c)$, then there exists $B\in\mathcal{B}$
such that $x,y\in B$.
\item[(ii)]
For every $B\in\mathcal{B}$, $B^*\subset D$.
\item[(iii)]
Family $\{B^*\}_{B\in\mathcal{B}}$ has the finite overlapping property, that is,
each point of $D$ belongs to at most $M=M(d)$ balls $B^*$, where $B\in\mathcal{B}$.
\end{itemize}
Such a family $\mathcal{B}$ may be constructed by considering Whitney decomposition of $D$
into cubes and then covering each Whitney cube by an appropriate
family of balls.

We have
\begin{align}
\E^k_{D}(u,u) &\geq
 \frac{1}{M^2} \sum_{B\in\mathcal{B}} \int_{B^*} \! \int_{B^*} (u(x)-u(y))^2k(x-y)\,dy\,dx\nonumber \\
&\geq 
\frac{c_k}{M^2} (2-\alpha) \sum_{B\in\mathcal{B}} \int_{B} \! \int_{B} (u(x)-u(y))^2|x-y|^{-d-\alpha}\,dy\,dx \nonumber\\
&\geq 
\frac{c_k}{M^2}  (2-\alpha) \int_{D} \! \int_{|x-y|<c\dist(x,D^c)} (u(x)-u(y))^2|x-y|^{-d-\alpha}\,dy\,dx. \label{new}
\end{align}
By  \cite[Proposition~5 and proof of Theorem~1]{Dyda2}, we may estimate
\begin{align}
\int_{D} \! \int_{|x-y|<c\dist(x,D^c)} & (u(x)-u(y))^2|x-y|^{-d-\alpha}\,dy\,dx \nonumber\\
&\geq
c(\alpha,d) \int_{D} \! \int_{D} (u(x)-u(y))^2|x-y|^{-d-\alpha}\,dy\,dx, \label{old}
\end{align}
with some constant $c(\alpha,d)$. We note that in \cite[proof of Theorem~1]{Dyda2}
the constant depends on the domain in question, but in our case, by scaling, we can take the same
constant independent of the choice of the ball $D$. One may also check that $c(\alpha,d)$ stays bounded when $\alpha\in (\alpha_0,2)$.
By (\ref{new}) and (\ref{old}) the lemma follows.
\end{proof}

\begin{lem}\label{lem:q}
If $q\in L^1(\R^d)$ is a nonnegative function with $\supp q \subset B_\rho$,
then for all $R>0$ and functions $u$
\[
\E^{q\ast q}_{B_R}(u,u) \leq 4 \|q\|_{L^1} \E^{q}_{B_{R+\rho}}(u,u).
\]
\end{lem}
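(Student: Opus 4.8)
The plan is to express $q \ast q$ as a convolution average and to exploit the inequality $(a+b)^2 \le 2a^2 + 2b^2$, inserting the intermediate point. First I would write, for the quadratic form on the left-hand side,
\[
\E^{q\ast q}_{B_R}(u,u) = \int_{B_R}\!\int_{B_R} \big(u(x)-u(y)\big)^2 (q\ast q)(x-y)\, dy\, dx
= \int_{B_R}\!\int_{B_R}\!\int_{\R^d} \big(u(x)-u(y)\big)^2 q(x-y-w)\, q(w)\, dw\, dy\, dx.
\]
Since $\supp q \subset B_\rho$, the inner factor $q(x-y-w)$ is supported where $|x-y-w|<\rho$, and $q(w)$ is supported where $|w|<\rho$. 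For such configurations I would introduce the midpoint-type auxiliary variable: set $z = y+w$, so that $q(x-y-w) = q(x-z)$ forces $|x-z|<\rho$ while $q(w) = q(z-y)$ forces $|z-y|<\rho$. Splitting $u(x)-u(y) = \big(u(x)-u(z)\big) + \big(u(z)-u(y)\big)$ and applying $(a+b)^2 \le 2a^2+2b^2$ gives
\[
\E^{q\ast q}_{B_R}(u,u) \le 2\int_{B_R}\!\int_{B_R}\!\int_{\R^d} \Big[\big(u(x)-u(z)\big)^2 + \big(u(z)-u(y)\big)^2\Big] q(x-z)\, q(z-y)\, dw\, dy\, dx,
\]
after the change of variables (whose Jacobian is $1$), where now $z$ replaces $w$ as the integration variable.

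Next I would handle the two resulting terms symmetrically. For the first term, I would integrate out $y$: since $q(z-y)$ integrates to $\|q\|_{L^1}$ over $y \in \R^d$ (and in particular over $B_R$, dropping the constraint only enlarges the integral), this term is bounded by $2\|q\|_{L^1}\int_{B_R}\!\int_{\R^d}\big(u(x)-u(z)\big)^2 q(x-z)\, dz\, dx$. Then I would observe that the constraint $x \in B_R$ together with $|x-z|<\rho$ forces $z \in B_{R+\rho}$, so this is at most $2\|q\|_{L^1}\E^q_{B_{R+\rho}}(u,u)$. The second term is treated identically by integrating out $x$ first and relabeling, using that $y \in B_R$ and $|z-y|<\rho$ force $z \in B_{R+\rho}$. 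Summing the two contributions yields the factor $4\|q\|_{L^1}$ and the claimed bound.

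I do not expect a serious obstacle here; the lemma is essentially a bookkeeping exercise with the triangle inequality for the $L^2$-type seminorm and with keeping track of which ball each variable lands in after the midpoint is inserted. The one point requiring a little care is the domain of integration for the auxiliary variable $z$: one must check that enlarging the $z$-integration from a constrained region to all of $\R^d$ (equivalently, to $B_{R+\rho}$ once the support of $q$ is accounted for) is legitimate because the integrand is nonnegative, and that in doing so one only ever needs $z \in B_{R+\rho}$ on the right-hand side, never a larger ball. Everything else is the standard Young-type estimate for convolution kernels applied to Dirichlet forms.
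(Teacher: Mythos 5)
Your proposal is correct and is essentially the paper's own argument: write $(q\ast q)(x-y)$ as an integral over the intermediate point $z$ (which the support condition confines to $B_{R+\rho}$), split $u(x)-u(y)$ through $u(z)$, apply $(a+b)^2\le 2a^2+2b^2$, and integrate out the free variable to produce $\|q\|_{L^1}$, yielding the factor $4$. The only cosmetic difference is that you reach the variable $z$ via an explicit change of variables $z=y+w$, whereas the paper writes the triple integral with $z\in B_{R+\rho}$ directly; the substance is identical.
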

\begin{proof}
We have
\begin{align*}
\E^{q\ast q}_{B_R}(u,u) &=
 \int_{B_R}\!\int_{B_R}\!\int_{B_{R+\rho}} (u(x)-u(y))^2 q(x-z) q(z-y)\,dz\,dy\,dx\\
&\leq
2 \int_{B_R}\!\int_{B_R}\!\int_{B_{R+\rho}} 
  \Big((u(x)-u(z))^2 + (u(z)-u(y))^2\Big)\\
&\qquad\qquad\qquad\qquad\qquad\qquad \times q(x-z) q(z-y)\,dz\,dy\,dx\\
&\leq
4 \int_{B_R}\!\int_{B_{R+\rho}} 
 (u(x)-u(z))^2  q(x-z)\,dz\,dx \int q(y)\,dy\\
&\leq 4 \|q\|_{L^1} \E^{q}_{B_{R+\rho}}(u,u). \qedhere
\end{align*}
\end{proof}

We are now ready to finish the proof of Theorem~\ref{thm:U1L1implyAB}.

\begin{prop}\label{prop:homo}
Assume that $L$  satisfies (L1), and let $0<\alpha_0<\alpha<2$.
Then there exists a constant $c=c(d,\alpha_0,C_2,C_3,a)$, such that for all $0<r<1$
\[
 \E^\alpha_{B_r}(u,u) \leq c \E^L_{B_r}(u,u)
\]
\end{prop}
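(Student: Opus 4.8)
The plan is to show that the $\alpha$-form controlled by the $L$-form, and the key idea is to use the geometric structure given by (L1) — on each dyadic-type annulus $B_{a^{-n+1}}\setminus B_{a^{-n}}$ there is a ball $B_n$ of proportional size on which $L(z)\geq C_3(2-\alpha)|z|^{-d-\alpha}$ — in order to manufacture a convolution kernel dominated by $L$, and then transfer it to the Gagliardo kernel via Lemmata~\ref{lem:q} and~\ref{lem:Whitney}. First I would fix, for each $n$, a nonnegative bump function $q_n$ supported in the ball $B_n$ from (L1), say $q_n = \ind_{B_n}$ (or a smooth version), and normalise so that the convolution $q_n\ast q_n$ is a nonnegative symmetric function supported in a ball of radius $2C_2 a^{-n}$ centered at the origin, with $\|q_n\|_{L^1}\asymp (C_2 a^{-n})^d$. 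Since $L(z)\geq C_3(2-\alpha)|z|^{-d-\alpha}\asymp C_3(2-\alpha)a^{n(d+\alpha)}$ on $B_n$, a suitable multiple $\lambda_n q_n$ satisfies $\lambda_n q_n(z)\le L(z)$ pointwise, and I want $\lambda_n$ as large as the ratio allows, i.e.\ $\lambda_n\asymp (2-\alpha)a^{n(d+\alpha)}$ up to constants depending only on $C_2,C_3,a,d$.

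Next, summing over $n$: because the annuli $B_{a^{-n+1}}\setminus B_{a^{-n}}$ for $n=0,1,2,\ldots$ cover $B_1\setminus\{0\}$, and $L\ge 0$ everywhere, one has $L \ge \sum_n \lambda_n q_n$ in an appropriate sense after possibly thinning to a subsequence of $n$'s so that the supports of the $q_n$ (and of the $q_n\ast q_n$) are disjoint — here one exploits that the radius of $B_n$ is only $C_2 a^{-n}$, much smaller than the annulus, so passing to every $N$-th index (with $N=N(a,C_2,d)$) separates them. Then $\E^L_{B_r}(u,u)\ge \sum_{n} \lambda_n \E^{q_n}_{B_r}(u,u)$, and by Lemma~\ref{lem:q} applied at scale $R = r$ (or $r$ minus the support radius) we get $\E^{q_n}_{B_{r}}(u,u)\gtrsim \|q_n\|_{L^1}^{-1}\E^{q_n\ast q_n}_{B_{r-\rho_n}}(u,u)$ where $\rho_n$ is the support radius of $q_n$. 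The product $\lambda_n \|q_n\|_{L^1}^{-1}$ is, by the choices above, $\asymp (2-\alpha) a^{n(d+\alpha)}(C_2 a^{-n})^{-d} = (2-\alpha)C_2^{-d} a^{n\alpha}$, and since $q_n\ast q_n$ is comparable to $\|q_n\|_{L^1}\cdot a^{nd}\ind_{\{|z|\lesssim a^{-n}\}}$, the combined kernel $\lambda_n \|q_n\|_{L^1}^{-1} (q_n\ast q_n)$ is, up to constants, bounded below by $(2-\alpha)a^{n(d+\alpha)}\ind_{\{|z|\asymp a^{-n}\}}$, which in turn dominates $c(2-\alpha)|z|^{-d-\alpha}$ restricted to an annulus of radius $\asymp a^{-n}$. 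This is exactly the hypothesis needed to apply Lemma~\ref{lem:Whitney} at scale $a^{-n}$: on the ball $B^* = B(x, a^{-n}/\eta)$ the $L$-form controls, up to $c_k = c(2-\alpha)$-independent constant after dividing out $(2-\alpha)$, the $\alpha$-form $\E^\alpha_{B}$ for balls $B$ of radius $\eta a^{-n}$.

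Finally, a covering/summation argument recombines these scale-by-scale estimates: the Gagliardo energy $\E^\alpha_{B_r}(u,u)$ decomposes (up to bounded overlap) over dyadic distance scales $|x-y|\asymp a^{-n}$, and on each such scale the previous paragraph, via Lemma~\ref{lem:Whitney}, bounds it by $c\,\E^L_{B_r}(u,u)$ with a constant independent of the scale and — crucially, because of the explicit factor $(2-\alpha)$ in (L1) and the matching factor $\alpha(2-\alpha)$ (here $(2-\alpha)$ suffices since $\alpha\ge\alpha_0$) in the definition \eqref{eq:Ead} of $\E^\alpha$ — independent of $\alpha\in(\alpha_0,2)$; one must check at this point that the constant $c(\alpha,d)$ from Lemma~\ref{lem:Whitney} (inherited from \cite{Dyda2}) stays bounded on $(\alpha_0,2)$, which is asserted there. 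The main obstacle I expect is the bookkeeping in the second and third paragraphs: arranging the bump functions so that the shifted supports of $q_n$ and $q_n\ast q_n$ stay inside the right balls and have controlled overlap across scales, getting the powers of $a$ to cancel correctly so that the net constant neither blows up nor vanishes as $n\to\infty$, and threading the $\alpha$-independence through every step. The geometric input of (L1) is used only locally at each scale, so the argument is genuinely a "sum over scales" and the delicate point is that the per-scale constant is uniform.
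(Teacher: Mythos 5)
Your overall strategy --- extract from (L1) sub-kernels dominated by $L$ at each scale, pass from $\E^{q_n}$ to $\E^{q_n\ast q_n}$ via Lemma~\ref{lem:q}, sum over scales, and finish with Lemma~\ref{lem:Whitney} --- is the same as the paper's, but there is a genuine gap at the central step. The ball $B_n$ furnished by (L1) is \emph{not} centered at the origin: it sits somewhere in the annulus, say $B_n=B(x_n,C_2a^{-n})$ with $|x_n|\asymp a^{-n}$. Hence for $q_n=\ind_{B_n}$ the convolution $q_n\ast q_n$ is supported in $B(2x_n,2C_2a^{-n})$, a ball \emph{displaced} from the origin by $\asymp a^{-n}$; it vanishes near $z=0$ and on most of the annulus $\{|z|\asymp a^{-n}\}$. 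Your claim that $q_n\ast q_n$ is comparable to $\|q_n\|_{L^1}a^{nd}\ind_{\{|z|\lesssim a^{-n}\}}$, and hence that the rescaled kernel dominates $(2-\alpha)|z|^{-d-\alpha}$ on a full annulus of radius $\asymp a^{-n}$, is therefore false. With your construction the summed kernel only satisfies another condition of type (L1) (positivity on one displaced ball per scale), which is where you started, so the argument becomes circular. The paper's fix is the symmetrization you omit: since $L$ is even (assumption (K)), the lower bound of (L1) holds on $B_n\cup(-B_n)$, and the cross term $\ind_{B_n}\ast\ind_{-B_n}$ in the self-convolution of $\ind_{B_n\cup(-B_n)}$ \emph{is} bounded below on the ball $B_{C_2a^{-n}}$ centered at the origin; this is exactly what makes $\sum_{n\ge n_0} a^{n(d+\alpha)}\ind_{B_{C_2a^{-n}}}(x-y)$ dominate $c\,|x-y|^{-d-\alpha}$ for all $x,y$ in a smaller concentric ball.

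Relatedly, your final recombination is not sound as written. You propose to apply Lemma~\ref{lem:Whitney} ``at scale $a^{-n}$'', but its hypothesis requires $\E^L_{B^*}\ge c_k\E^\alpha_B$ for \emph{all} balls $B$ of radius $\eta a^{-n}$, i.e.\ control of the Gagliardo form over all pair distances $|x-y|<2\eta a^{-n}$; a single-scale kernel bounded below on one (displaced) ball does not give this, and if instead each dyadic piece of $\E^\alpha_{B_r}$ were bounded by the full $\E^L_{B_r}$, summing over infinitely many scales would produce a divergent constant. The correct order of operations (as in the paper) is: sum the kernel lower bounds over all $n\ge n_0$ \emph{first}, obtaining $\E^L_{B_r}(u,u)\ge c(2-\alpha)\iint_{B_{C_2r/(4a)}\times B_{C_2r/(4a)}}(u(x)-u(y))^2|x-y|^{-d-\alpha}\,dy\,dx$, and only then apply Lemma~\ref{lem:Whitney} once, using the translation invariance of $L$, to replace the smaller ball $B_{C_2r/(4a)}$ by $B_r$. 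Your bookkeeping of constants ($\lambda_n\asymp(2-\alpha)a^{n(d+\alpha)}$, $\lambda_n\|q_n\|_{L^1}^{-1}\asymp(2-\alpha)a^{n\alpha}$) and the discussion of $\alpha$-uniformity are fine; the missing ideas are the symmetrization $B_n\cup(-B_n)$ and the single, final application of the Whitney lemma.
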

\begin{proof}
Let
\[
 q_n(z) = (L(z)\wedge C_3(2-\alpha)|z|^{-d-\alpha}) \ind_{B_{a^{-n}} \setminus
B_{a^{-n-1}} }.
\]
Using estimate $(a^\alpha -1)/\alpha \leq (a^2-1)/2$ it is easy to see that
\[
 \|q_n\|_{L^1} \leq c(d,C_3,a) (2-\alpha) a^{n \alpha}.
\]
Let $B_n \subset B_{a^{-n}} \setminus B_{a^{-n-1}}$ be a ball like in the
assumption (L1),
that is, having radius $C_2 a^{-n}$ and such that
\[
L(z) \geq C_3(2-\alpha) |z|^{-d-\alpha} \geq C_3(2-\alpha) a^{(n+1)(d+\alpha)},
\quad z\in B_n \cup -B_n.
\]
We obtain
\begin{align*}
q_n\ast q_n(z) &\geq
  C_3^2 (2-\alpha)^2 a^{2(n+1)(d+\alpha)}\; \ind_{B_n \cup -B_n} \ast \ind_{B_n \cup -B_n}(z)\\
&\geq (2-\alpha)^2 c(d,C_2,C_3,a) a^{nd + 2n\alpha}\; \ind_{B_{C_2a^{-n}} }(z).
\end{align*}
We fix $0<r<1$. Let $n_0$ be the smallest natural number such that
$a^{-n_0} < r/2$. From inequality $L\geq \sum_{n=n_0}^\infty q_n$ and
Lemma~\ref{lem:q} we obtain
\begin{align*}
\E^L_{B_r}(u,u) &\geq
 \sum_{n=n_0}^\infty \E^{q_n}_{B_{r/2+a^{-n}}}(u,u)
\geq  \sum_{n=n_0}^\infty (4\|q_n\|_{L^1})^{-1}\E^{q_n\ast q_n}_{B_{r/2}}(u,u)\\
&\geq 
 c(d,C_2,C_3,a)(2-\alpha) \\
&\quad\times \sum_{n=n_0}^\infty
  \int_{B_{r/2}}\!\int_{B_{r/2}} (u(x)-u(y))^2 a^{n(d+\alpha)} \ind_{B_{C_2 a^{-n}}}(x-y)\,dy\,dx\\
&\geq 
 c'(d,C_2,C_3,a)(2-\alpha)
  \int_{B_{C_2 r/(4a)}}\!\int_{B_{C_2r/(4a)}} \frac{(u(x)-u(y))^2}{|x-y|^{d+\alpha}}\,dy\,dx\\
&=
c'(d,\alpha_0,C_2,C_3,a) \E^\alpha_{B_{C_2r/(4a)}}(u,u).
\end{align*}
The proof is finished by applying Lemma~\ref{lem:Whitney}.
\end{proof}

Let us show that (L1) is not necessary for (A) and (B) to hold. The reason is
that (A) uses only integrated quantities but not pointwise estimates on $k$.
However, Assumption (L1) is weak and useful at the same time.

\begin{exmp}\label{ex:cone}

\begin{figure}
\centering
\includegraphics[width=\textwidth]{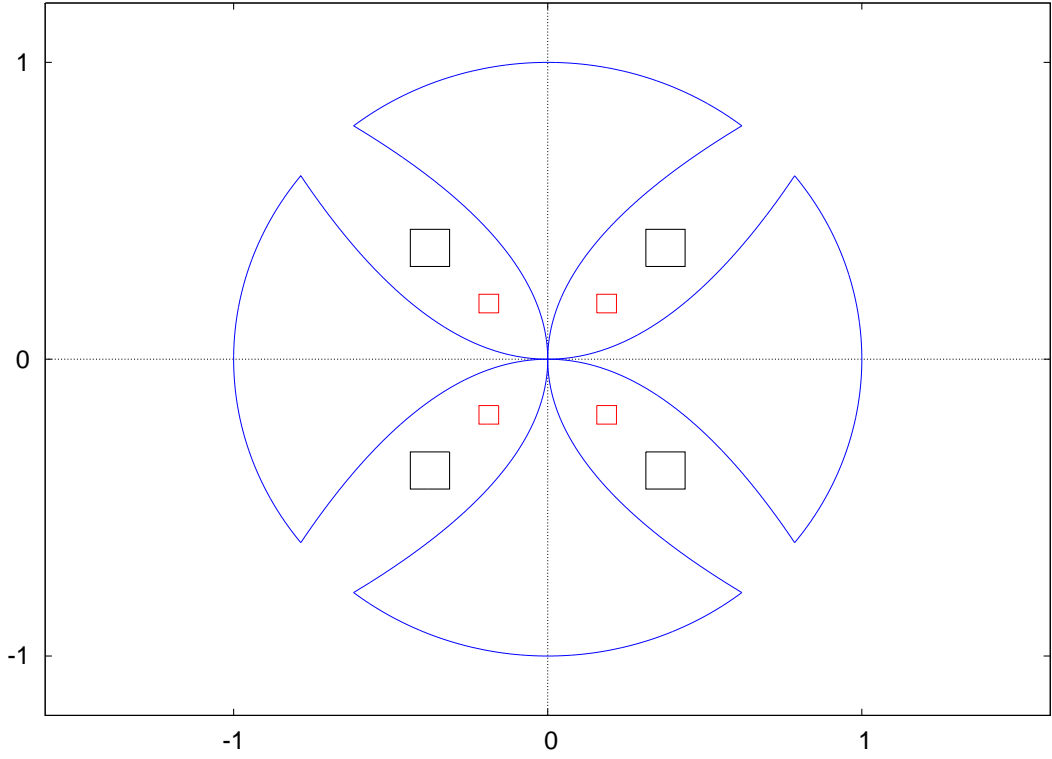}
\caption{Support of the kernel $k$ from Example~\ref{ex:cone} (with $b=1/2$) consists of four thorns.
Also sets $P_0$ and $P_1$ (see (\ref{eq:Pn})) are shown: four larger squares constitute set $P_0$, and four smaller -- set $P_1$. }
\label{fig:thorn}
\end{figure}

Let $b\in(0,1)$ and
\[
 \Gamma = \{ (x_1,x_2)\in \R^2 : |x_2|\geq |x_1|^b \textrm{ or } |x_1|\geq |x_2|^b \}.
\]
We consider the following function
\begin{equation}
k(z) = (2-\alpha) \ind_{\Gamma \cap B_1}(z) |z|^{-2-\beta}, \quad z\in \R^2,
\end{equation}
where $\beta = \alpha-1+1/b$, see Figure~\ref{fig:thorn}.
We will show that for such a function $k$ conditions (A) and (B) are satisfied.

We have, for $0<r<1$
\begin{align}
\int_{B_r} |z|^2 k(z)\,dz &\leq 8(2-\alpha)\int_0^r \int_0^{x^{1/b}} (x^2+y^2)^{-\beta/2} \,dy\,dx\nonumber\\
&\leq 8(2-\alpha)\int_0^r \int_0^{x^{1/b}} x^{-\beta} \,dy\,dx = 
 8 r^{2-\alpha}, \label{qU1}
\end{align}
hence $k$ satisfies (U1) with $C_1 = 8$.
For $n=0,1,2,\ldots$,
we consider the set
\begin{align*}
E_n = & \{ (x_1,x_2): 2^{-n-2}\leq |x_1| \leq 2^{-n-1} \textrm{ and } |x_2| \leq 2^{-2-(n+2)/b} \} \\
& \cup
 \{ (x_1,x_2): 2^{-n-2}\leq |x_2| \leq 2^{-n-1} \textrm{ and } |x_1| \leq 2^{-2-(n+2)/b} \}.
\end{align*}
We have $E_n \subset \Gamma$. Let
\[
 q_n(z) =2^{n(\beta+2)} \ind_{E_n}(z)
\]
and
\begin{equation}\label{eq:Pn}
 P_n = \{ (x_1,x_2): \frac{5}{4} 2^{-n-2} \leq |x_1|, |x_2| \leq \frac{7}{4} 2^{-n-2} \}. 
\end{equation}
If $(x_1,x_2)\in P_n$, then
\begin{align}
q_n \ast q_n(x_1,x_2) &\geq
 \int_{ - 2^{-2-(n+2)/b}}^{2^{-2-(n+2)/b}} \int_{\{|z_2-x_2| \leq 2^{-2-(n+2)/b} \} } 
\!\!\!\!\!\!\!\!\!\!\!\!\!\!\!\!\!\!\!\!\!\!\!\!\!\!\!\!\!\!\!\!\!\!\!\!\!\!
q_n(x_1-z_1,x_2-z_2)  q_n(z_1,z_2)\,dz_2\,dz_1 \nonumber\\
&= 2^{2n(\beta+2)} (2^{-1-(n+2)/b})^2 =2^{-2-4/b} 2^{n(2\alpha+2)}. \label{homog}
\end{align}
We fix $R\in (0,1)$ and take the smallest natural number $n_0$ for which $2^{-n_0}<R/2$.
Since $\|q_n\|_{L^1}= 2^{n\alpha-1-2/b}\leq 2^{n\alpha-2}$, from Lemma~\ref{lem:q} we obtain
\begin{equation}\label{ineq2}
(2-\alpha) \sum_{n=n_0}^\infty 2^{-n\alpha} \E^{q_n\ast q_n}_{B_{R/2}}(u,u) 
 \leq 
   (2-\alpha) \sum_{n=n_0}^\infty  \E^{q_n}_{B_{R}}(u,u) \leq \E^k_{B_{R}}(u,u).
\end{equation}
On the other hand, by (\ref{homog})
\[
(2-\alpha) \sum_{n=n_0}^\infty 2^{-n\alpha} q_n\ast q_n(z)
 \geq (2-\alpha) 2^{-2-4/b} \sum_{n=n_0}^\infty 2^{n(\alpha+2)} \ind_{P_n}(z) =: f(z).
\]
We note that each set $P_n \cap B_{2^{-n}} \setminus  B_{2^{-n-1}}$ contains a~ball
$B_{n+1}$ of radius $c2^{-n-1}$, where $c$ is some universal constant.
Furthermore, on this ball $B_{n+1}$ we have
\[
 f(z) = (2-\alpha) 2^{-2-4/b} 2^{n(\alpha+2)} \geq (2-\alpha) 2^{\alpha-4/b} |z|^{-(\alpha+2)}
\geq (2-\alpha) 2^{-4/b} |z|^{-(\alpha+2)},
\]
provided $n\geq n_0$.
Consequently, $f$ satisfies (L1) with $a=2$ and $n\geq n_0+1$,
or, equivalently, a~rescaled function $x\mapsto f(2^{-n_0-1} x)$ satisfies (L1) with $a=2$.
Since $R/8<2^{-n_0-1}$, we deduce from Proposition~\ref{prop:homo}
\[
 \E^\alpha_{B_{R/8}}(u,u) \leq c \E^f_{B_{R/8}}(u,u) \leq 2^{2+4/b}c \sum_{n=n_0}^\infty 2^{-n\alpha} \E^{q_n\ast q_n}_{B_{R/2}}(u,u).
\]
From this, (\ref{ineq2}) and Lemma~\ref{lem:Whitney} we deduce that
\[
 \E^\alpha_{B_{R}}(u,u) \leq c \E^k_{B_R}(u,u).
\]
The reverse inequality follows from Proposition~\ref{prop:upper}, hence (A) is satisfied. Also (B)
is satisfied by Proposition~\ref{prop:U1impliesB}.
\end{exmp}

\section{Regularity estimates}\label{sec:regularity}

In this section we provide the proof of Theorem \ref{theo:reg_result}. The main
idea of the proof is to extend a result of \cite{Kas09}.

\begin{lem}\label{lem:hh-nonloc-gen}
Assume $x_0 \in \R^d$. For $r\in (0,1)$ and $x \in
B_{r/2}(x_0)$ let $\nu^x_r$ be a measure on $\R^d \setminus
B_r(x_0)$ satisfying 
\begin{align}\label{eq:assum_nu_r}
 \limsup\limits_{j \to \infty} (\eta_{r,j})^{1/j} < 1, \quad \text{ where }
\eta_{r,j} :=
\sup\limits_{x \in B_{r/2}(x_0)} \nu^x_r(\R^d
\setminus B_{2^{j}
r}(x_0) ) < \infty \,.
\end{align}
Assume that for some $c_1 \geq 1$, $p>0$, every $r \in (0,1)$ and every $u \in
L^\infty(\R^d) \cap H^{\alpha/2}_{\text{loc}}(B_{r}(x_0))$
satisfying $\cE(u,\phi) = 0$ 
for every $\phi \in C_c^\infty(B_r(x_0))$ and $u\geq 0$ in $B_{r}(x_0)$, the
weak
Harnack inequality
\begin{align}\label{eq:hoelder-harnack-ass_corr}
\Big( \fint\limits_{B_{r/2}(x_0)} u(x)^p \,dx \Big)^{1/p} \leq c_1
\inf\limits_{x \in B_{r/4}(x_0)} u + c_1 \sup\limits_{x \in
B_{r/2}(x_0)} \il_{\R^d} u^-(z) \nu^x_r (dz)
\,.
\end{align}
holds true. Then there exist $\beta \in (0,1)$, $c > 0$ such that for every
$r \in (0,1)$ and every $u \in
L^\infty(\R^d) \cap H^{\alpha/2}_{\text{loc}}(B_{r}(x_0))$
satisfying $\cE(u,\phi) = 0$ 
for every $\phi \in C_c^\infty(B_r(x_0))$ and every $\rho \in (0,r/2)$ the
following
regularity estimate holds:
\begin{align} \label{eq:hoelder-asser-abstr_corr}
\sup\limits_{x,y \in B_{\rho}(x_0)} |u(x) - u(y)| \leq c \|u\|_\infty
(\rho/r)^{\beta} \,.
\end{align}If $c_1$, $p$ and the limes superior in (\ref{eq:assum_nu_r}) are
independent of $x_0$ or $\alpha$, then so is $c$.
\end{lem}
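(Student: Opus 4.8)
The plan is to derive a Hölder estimate from the weak Harnack inequality via the classical oscillation-decay iteration, adapted to the nonlocal setting where the tail term in \eqref{eq:hoelder-harnack-ass_corr} must be controlled. First I would fix $u$ with $\cE(u,\phi)=0$ on $B_r(x_0)$ and, by rescaling (replacing $u$ by $u(x_0 + r\,\cdot\,)$, which preserves the class of solutions and the structural assumptions) together with normalization, reduce to the case $x_0=0$, $r=1$, $\|u\|_\infty\le 1$. Writing $\omega(s) = \operatorname{osc}_{B_s} u = \sup_{B_s}u - \inf_{B_s}u$, the goal is to show $\omega(2^{-j}) \le C\,\theta^{j}$ for some $\theta\in(0,1)$, which is equivalent to \eqref{eq:hoelder-asser-abstr_corr} with $\beta = \log_2(1/\theta)$ after undoing the scaling.

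The core of the argument is a single induction step: assuming $\omega(2^{-j})\le 2M_j$ with suitable $M_j$, I want to produce $\omega(2^{-j-1}) \le 2M_{j+1}$ with $M_{j+1} \le \theta M_j$ for a fixed $\theta<1$. At scale $\rho_j = 2^{-j}$ one of the two functions $u - \inf_{B_{\rho_j}}u$ or $\sup_{B_{\rho_j}}u - u$ is nonnegative on $B_{\rho_j}$ and has average at least $M_j$ on $B_{\rho_j/2}$; call it $v$. Apply \eqref{eq:hoelder-harnack-ass_corr} at radius $\rho_j$ to $v$: the left side is $\gtrsim M_j$, the infimum on $B_{\rho_j/4}$ is controlled, and there is the tail term $c_1 \sup_{x\in B_{\rho_j/2}}\int v^-(z)\,\nu^x_{\rho_j}(dz)$. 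Since $v\ge 0$ on $B_{\rho_j}$, the negative part $v^-$ is supported outside $B_{\rho_j}$, and on the annulus $B_{2^{i+1}\rho_j}\setminus B_{2^i\rho_j}$ one has $v^- \le \operatorname{osc}_{B_{2^{i+1}\rho_j}} u \le 2M_{j-i}$ (using monotonicity of the oscillation and the inductive bounds for the coarser scales, with the convention $M_\ell = \|u\|_\infty \le 1$ for $\ell\le 0$). Hence the tail is bounded by $c_1\sum_{i\ge 0} 2M_{j-i}\,\eta_{\rho_j, i}$, where $\eta_{\rho_j,i}$ is the mass from \eqref{eq:assum_nu_r}. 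Feeding this back gives a recursion of the form $M_{j+1} \le (1-\delta)M_j + C\sum_{i\ge 1}\eta_{\rho_j,i} M_{j-i}$ for some $\delta\in(0,1)$ depending only on $c_1,p,d$.

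The main obstacle — and the reason assumption \eqref{eq:assum_nu_r} is phrased with a limit superior strictly below $1$ — is closing this recursion: a naive geometric ansatz $M_j \le \theta^j$ fails because the convolution-type term $\sum_i \eta_{\rho_j,i}\theta^{-i}$ can be large. I would handle this by the standard device of choosing $\theta$ close to $1$ and a large integer $k$ so that, by \eqref{eq:assum_nu_r}, $\eta_{\rho,i} \le \theta^{2i}$ for all $i\ge k$ and all $\rho\in(0,1)$; split the tail sum at $i=k$, absorb the tail $i\ge k$ into a convergent geometric series contributing a small multiple of $\theta^j$, and absorb the finite head $i<k$ by noting each term is $\le C\,\theta^{j-k}$ times a bounded constant, then enlarging the multiplicative constant $C$ in the ansatz $M_j \le C\theta^j$ to dominate the finitely many initial terms. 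This is a routine but slightly delicate bookkeeping argument; one verifies that $\delta$, $\theta$, $k$, and the final constant depend only on $d$, $c_1$, $p$ and the value of the limit superior in \eqref{eq:assum_nu_r}, which yields the stated uniformity in $x_0$ and $\alpha$. Undoing the initial rescaling turns $\omega(\rho_j/r) \lesssim \|u\|_\infty (\rho_j/r)^\beta$ into \eqref{eq:hoelder-asser-abstr_corr} for dyadic $\rho$, and arbitrary $\rho\in(0,r/2)$ follows by monotonicity of the oscillation.
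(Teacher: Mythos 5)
Your overall skeleton (dyadic decay of oscillation via the weak Harnack inequality, with the tail controlled through oscillation bounds at coarser scales, in the spirit of Moser and Silvestre) is the same as the paper's. The genuine problem sits exactly in the step you call ``routine but slightly delicate bookkeeping'': the recursion you set up cannot be closed. On the annulus $B_{2^{i+1}\rho_j}\setminus B_{2^{i}\rho_j}$ you bound $v^-$ by the full oscillation $\operatorname{osc}_{B_{2^{i+1}\rho_j}}u\le 2M_{j-i}$. With this crude bound, the induction step for the ansatz $M_j\le C\theta^j$ requires, after dividing out $C\theta^j$, an inequality of the shape $(1-\delta)+c\sum_{i\ge 1}\eta_{\rho_j,i}\,\theta^{-i}\le\theta^{2}$, where $\delta$ is the small gain coming from $c_1$ and $p$ in \eqref{eq:hoelder-harnack-ass_corr}. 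The head of that sum is not small: already $\eta_{\rho_j,1}\theta^{-1}$ is a fixed constant of order one (in the application it is comparable to the mass of $\nu^x_{\rho_j}$ outside $B_{2\rho_j}$), while $\delta$ is small, so the inequality fails in general. Enlarging the constant $C$ cannot repair this, because the head terms involve $M_{j-i}$ for small $i$, i.e.\ they are homogeneous in the very sequence being estimated; both sides of the induction step scale with $C$, so increasing $C$ changes nothing (it only helps against the inhomogeneous seed $M_\ell=\|u\|_\infty$ for $\ell\le 0$). Concretely, a recursion such as $M_{j+1}\le(1-\delta)M_j+\eta_1 M_{j-1}$ with $\eta_1\ge\delta$ forces no decay at all. (A side remark: no rescaling is needed, and the claim that $u(x_0+r\,\cdot)$ ``preserves the structural assumptions'' would itself require justification, since the rescaled function solves an equation with a rescaled kernel and rescaled measures; hypothesis \eqref{eq:hoelder-harnack-ass_corr} is already assumed at every radius $r\in(0,1)$ and center $x_0$, so one simply applies it at each dyadic scale, as the paper does.)

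What is missing is the sharper tail estimate, and this is precisely how the paper closes the loop. Taking $v=\sup_{B_{\rho_j}}u-u$ (say), on the annulus one has $v\ge \operatorname{osc}_{B_{\rho_j}}u-\operatorname{osc}_{B_{2^{i+1}\rho_j}}u$, hence $v^-\le \operatorname{osc}_{B_{2^{i+1}\rho_j}}u-\operatorname{osc}_{B_{\rho_j}}u$: only the \emph{excess} of the coarser-scale oscillation over the current one enters the tail. In the paper this appears after normalizing so that $|v|\le 1$ at the current scale, which gives $w^-\le 2(\theta^{i\beta}-1)$ on the $i$-th annulus, with $\theta=4$ a fixed scale ratio and $\beta$ the (yet to be chosen) H\"older exponent. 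The iteration then closes because the parameters are chosen in the right order: the Harnack gain $\kappa$ is fixed first, independently of $\beta$; assumption \eqref{eq:assum_nu_r} yields $\beta_0$ and a cutoff index $l$ so that the tail $\sum_{i>l}\theta^{i\beta_0}\eta$ is below a fixed small threshold; finally $\beta<\beta_0$ is taken so small that the finitely many head terms, each of size $(\theta^{i\beta}-1)\eta$, drop below the same threshold, since $\theta^{i\beta}-1\to 0$ as $\beta\to 0$. It is this ``only the excess counts, and the excess vanishes as $\beta\to 0$'' mechanism -- not a larger multiplicative constant -- that absorbs the near annuli; once you replace your crude bound by the excess bound and make the decay exponent the last, sufficiently small parameter, your argument becomes the paper's proof.
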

\pagebreak[3] 

\begin{rem} Instead of (\ref{eq:hoelder-harnack-ass_corr}) one may assume
the strong Harnack inequality
\begin{align}\label{eq:hoelder-harnack-ass_corr_strong}
\sup\limits_{x \in B_{r/4}(x_0)} u \leq c_1
\inf\limits_{x \in B_{r/4}(x_0)} u + c_1 \sup\limits_{x \in
B_{r/2}(x_0)} \il_{\R^d} u^-(z) \nu^x_r (dz) \,.
\end{align}
One only needs to change the constant $c_2$ in the proof of the lemma.
\end{rem}

\begin{proof} The idea is to adopt the methods of \cite{Mos61} to the nonlocal
situation,  see also \cite{Sil06}. Fix $x_0 \in \R^d$. Let $c_1>0$ be the
constant in (\ref{eq:hoelder-harnack-ass_corr}). Let $\theta = 4$. Set
$c_2 = c_1 \theta^{d/p}
2^{\frac{1-d}{p}}$ and $\kappa = (c_2)^{-1}/2$. Let $\beta \in
(0, \ln(\tfrac{2}{2-\kappa}) / \ln(\theta))$ be another constant to be fixed
later. Note that $(1-\tfrac{\kappa}{2}) \leq
\theta^{-\beta}$.

Let $r>0$ and $u \in
L^\infty(\R^d) \cap H^{\alpha/2}_{\text{loc}}(B_{r}(x_0))$
satisfy $\cE(u,\phi) = 0$ 
for every $\phi \in C_c^\infty(B_r(x_0))$. We can
assume $u(x_0) = 0$ which can be obtained by adding a constant if needed. Let
us write $B_r$ instead of $B_r(x_0)$ for $r>0$. 

We will construct an increasing sequence $(m_n)$ and a
decreasing sequence $(M_n)$ satisfying for every $n \in \Z$
\begin{align}\label{eq:hh-nonloc-1}
\begin{split}
&m_n \leq u(x) \leq M_n \quad \text{ for almost all } x \in B_{r \theta^{-n}}
\,, \\
&M_n - m_n \leq K \theta^{-n\beta} \,,
\end{split}
\end{align}
where $K=M_0-m_0\in [0,2\|u\|_\infty]$. Set $M_0=\|u\|_\infty$, $m_0 =
\inf\limits_{\R^d} u(x)$ and $M_{-n}=M_0$, $m_{-n}=m_0$ for every $n \in
\N$. Assume there is $k \in \N$ and there are $M_n, m_n$ such that
(\ref{eq:hh-nonloc-1}) holds for $n \leq k-1$. We need to choose $m_k, M_k$ such
that (\ref{eq:hh-nonloc-1}) holds for $n=k$. 

For $x \in \R^d$ set
\[ v(x) = \Big( u(x) - \frac{M_{k-1}+m_{k-1}}{2} \Big)
\frac{2 \theta^{(k-1)\beta}}{K} \,, \]

The definition of $v$ implies $|v(x)|
\leq 1$ for almost every $x \in B_{r \theta^{-(k-1)}}$ and $\cE(v,\phi) = 0$ 
for every $\phi \in C_c^\infty(B_r)$. 

We now derive a pointwise estimate of the function $v$ on $\R^d
\setminus B_{r
\theta^{-(k-1)}}$. Given $y \in
\R^d$ with $|y-x_0| \geq r \theta^{-(k-1)}$ there is $j \in \N$ such
that
\[ r \theta^{-k+j} \leq |y-x_0| < r \theta^{-k+j+1}   \,. \]
For such $y$ and $j$ we conclude
{\allowdisplaybreaks
\begin{align*}
\frac{K}{2 \theta^{(k-1)\beta}} v(y) &=  \Big( u(y) -
\frac{M_{k-1}+m_{k-1}}{2}
\Big) \\
&\leq \Big(M_{k-j-1} - m_{k-j-1} + m_{k-j-1} - \frac{M_{k-1}+m_{k-1}}{2} \Big)
\\
&\leq \Big(M_{k-j-1} - m_{k-j-1} - \frac{M_{k-1}-m_{k-1}}{2} \Big) \\
& \leq \Big(K
\theta^{-(k-j-1)\beta} - \tfrac{K}{2} \theta^{-(k-1)\beta} \Big) \,,  \\
\text{ i.e. }  v(y) &\leq 2\theta^{j\beta} - 1 \; \leq 2 \Big(\theta
\frac{|y-z|}{r \theta^{-(k-1)}}\Big)^\beta -1  \,.
\end{align*}
}Analogously, 
\begin{align*} 
v(y) \geq 1- 2\theta^{j\beta} \; \geq 1- 2 \Big(\theta
\frac{|y-z|}{r \theta^{-(k-1)}}\Big)^\beta  \,.
\end{align*}
Now there are two cases:

Case 1: $|\{x \in B_{r \theta^{-k}}: v(x) \leq 0 \}| \geq \frac12 |B_{r
\theta^{-k}}|$ 

Case 2: $|\{x \in B_{r \theta^{-k}}: v(x) >  0 \}) \geq \frac12
|B_{r \theta^{-k}}|$ \\

We work out details for Case 1 and comment afterwards on Case 2.
In Case 1 our aim is to show $v(x) \leq 1-\kappa$ for almost every $x \in
B_{r \theta^{-k}}$.
 Because then for
almost every $x \in B_{r \theta^{-k}}$
\begin{align} \label{eq:hh-nonloc-3}
\begin{split}
u(x) &\leq  \frac{M_{k-1}+m_{k-1}}{2}  + \tfrac{(1-\kappa) K}{2}
\theta^{-(k-1)\beta} \\
&= m_{k-1} + \frac{M_{k-1}- m_{k-1}}{2}  + \tfrac{(1-\kappa) K}{2}
\theta^{-(k-1)\beta} \\
&\leq m_{k-1} + \tfrac{K}{2} \theta^{-(k-1)\beta}  + \tfrac{(1-\kappa) K}{2}
\theta^{-(k-1)\beta} = m_{k-1} + (1-\tfrac{\kappa}{2}) K \theta^{-(k-1)\beta} \\
&\leq m_{k-1} + K \theta^{-k\beta} \,.
\end{split}
\end{align}
 In this case we set $m_k=m_{k-1}$
and $M_k= m_{k} + K \theta^{-k\beta}$ and
obtain, using (\ref{eq:hh-nonloc-3}), $m_k \leq u(x) \leq M_k$ for almost every
$x
\in
B_{r \theta^{-k}}$, what needs to be proved.

Let us show $v(x) \leq 1-\kappa$ for almost every $x \in
B_{r \theta^{-k}}$. Consider $w=1-v$. Then $\cE(w,\phi) = 0$ 
for every $\phi \in C_c^\infty(B_{r \theta^{-k+1}})$ and $w
\geq 0$ in $B_{r \theta^{-k+1}}$. We apply (\ref{eq:hoelder-harnack-ass_corr})
and obtain
\begin{align}
\Big( \fint\limits_{B_{\frac12 r \theta^{-k+1}}} w(x)^p \,dx
\Big)^{1/p} \leq c_1 
\inf\limits_{B_{\frac14 r \theta^{-k+1}}} w + c_1 \sup\limits_{x \in
B_{\frac12 r \theta^{-k+1}}} \il_{\R^d} w^-(z) \nu^x_{r \theta^{-k+1}} (dz)
\,.
\end{align}

In the situation of Case 1 we obtain
\begin{align}\label{eq:harnack_applied_corr}
(1/2)^{1/p} &\leq \Big( \fint\limits_{B_{r \theta^{-k}}} w(x)^p \, dx
\Big)^{1/p} \leq (\tfrac{\theta}{2})^{d/p}\Big( \fint\limits_{B_{\frac12 r
\theta^{-k+1}}} w(x)^p \, dx
\Big)^{1/p} \\
&\leq c_1
(\tfrac{\theta}{2})^{d/p} \inf\limits_{B_{r \theta^{-k}}} w + c_1
(\tfrac{\theta}{2})^{d/p} \sup\limits_{x \in
B_{\frac12 r \theta^{-k+1}}}  \il_{\R^d}  w^-(y)
\nu^x_{r \theta^{-(k-1)}}(dy) \,.
\end{align}
For $0<R<S$ let us abbreviate the annulus $B_S(x_0) \setminus B_R(x_0)$ by
$A_{R,S}(x_0)$. Then we obtain  
\begin{align*}
\inf\limits_{B_{r \theta^{-k}}} w &\geq (c_2)^{-1} - \sup\limits_{x \in
B_{\frac12 r \theta^{-k+1}}} \il_{\R^d} w^-(y)
\nu^x_{r \theta^{-(k-1)}}(dy) \\
&\geq (c_2)^{-1} - \suml_{j=1}^\infty \sup\limits_{x \in
B_{\frac12 r \theta^{-k+1}}} 
\il_{\R^d} \mathbbm{1}_{A_{r \theta^{-k+j},r \theta^{-k+j+1}} (x_0)} (1-v(y))^-
\, \nu^x_{r \theta^{-(k-1)}}(dy) \,, \\
&\geq (c_2)^{-1} - \suml_{j=1}^\infty
(2\theta^{j\beta}-2) \eta_{r\theta^{-(k-1)},2(j-1)} \\
&= (c_2)^{-1} - 2 \suml_{j=1}^\infty
(\theta^{j\beta}-1)
\eta_{r\theta^{-(k-1)},2(j-1)} \,.
\end{align*}
Assumption (\ref{eq:assum_nu_r}) guarantees  $\suml_{j=1}^\infty
\theta^{j\beta}
\eta_{r\theta^{-(k-1)},2(j-1)} < \infty$ if $0<\beta<\beta_0$ and $\beta_0$ is sufficiently
small.  Choose $\beta_0$
accordingly. Then there is $l \in \N$ with
\[ \suml_{j=l+1}^\infty (\theta^{j\beta_0}-1)
\eta_{r\theta^{-(k-1)},2(j-1)} \leq \suml_{j=l+1}^\infty \theta^{j\beta_0}
\eta_{r\theta^{-(k-1)},2(j-1)} \leq (8 c_2)^{-1} \,. \]
Given $l$ we choose $\beta \in (0, \beta_0)$ sufficiently small such that 
\[ \suml_{j=1}^{l} (\theta^{j\beta}-1)
\eta_{r\theta^{-(k-1)},2(j-1)} \leq (8 c_2)^{-1} \,. \]
Thus $w
\geq \kappa$ on $B_{r \theta^{-k}}$ or , equivalently, $v \leq
1-\kappa$ on $B_{r \theta^{-k}}$.

In Case 2 our aim is to show $v(x) \geq -1+\kappa$. This time, set $w=1+v$.
Following
the strategy above one sets $M_k=M_{k-1}$ and $m_k= M_{k} - K
\theta^{-k\beta}$ leading to the desired result.

Let us show how (\ref{eq:hh-nonloc-1}) proves the assertion of the
lemma. Let $\rho \in (0,r/2)$. Choose $m\in \N_0$ with $r \theta^{-(m+1)} \leq
\rho < r
\theta^{-m}$. Then condition (\ref{eq:hh-nonloc-1})
implies 
\[  \sup\limits_{x,y \in B_{\rho}(x_0)} |u(x) - u(y)| \leq K \theta^{-m
\beta} =(r \theta^{-m-1})^\beta r^{-\beta} K
\theta^\beta \leq K \theta^{\beta} \Big( \frac{\rho}{r} \Big)^\beta  \,.
\]
The assertion of the lemma follows and the proof is complete.
\end{proof}


Let us explain the proof of our main application. 

\begin{proof}[Proof of Theorem \ref{theo:reg_result}]
The proof of Theorem \ref{theo:reg_result} follows from Lemma
\ref{lem:hh-nonloc-gen} if we can
show that, for every $r \in (0,1)$ and every $u \in
L^\infty(\R^d) \cap H^{\alpha/2}_{\text{loc}}(B_{r}(x_0))$
satisfying $\cE(u,\phi) = 0$ 
for every $\phi \in C_c^\infty(B_r(x_0))$ and $u\geq 0$ in $B_{r}(x_0)$, the
weak
Harnack inequality (\ref{eq:hoelder-harnack-ass_corr}) holds true with
$(\nu^x_r)_{x\in B_{r/2}(x_0)}$ satisfying (\ref{eq:assum_nu_r}).

Fix $x_0 \in \R^d$. Note that none of the constants below will depend on
$x_0$. For $r\in (0,1)$ and $x \in B_{r/2}(x_0)$ define a measure
$\nu^x_r$ on
$\R^d \setminus B_r(x_0)$ by 
\[ \nu^x_r(A) = \int\limits_A U(y-x) \, dy \; \big( \int\limits_{\R^d
\setminus B_r(x_0)} U(y-x_0) \, dy \big)^{-1}  \]
for every Borel set $A \subset \R^d \setminus B_r(x_0)$. Assumption (U2)
implies that there are $c_1 >0$ and $R_0>1$ such that for every $R > R_0$, $r\in
(0,1)$ and $x \in B_{r/2}(x_0)$  
\begin{align}\label{eq:est_U}
 \int\limits_{\R^d\setminus B_{R}(x_0)} U(z-x) \, dz \leq c_1 R^{-\gamma} 
\end{align}
Because of Assumptions (\ref{A:LU}) and (L1) there is $c_2 >0$ with  
\begin{align}\label{eq:est_Ukomp}
\Big(\int\limits_{\R^d\setminus B_{r}} U(z) \, dz\Big)^{-1}  &\leq 
\Big(\int\limits_{\R^d\setminus B_{r}} L(z) \, dz\Big)^{-1} \leq c_2 C_3
r^{\alpha} \,.
\end{align}
Estimates (\ref{eq:est_U}) and (\ref{eq:est_Ukomp}) imply:
\begin{align*}
& \exists c_3 \geq 1 \; \forall r \in (0,1) \; \exists j_0 \geq 1 \; \forall j
\geq j_0 \; \forall x \in B_{r/2}(x_0): \\
& \qquad 
\nu^x_r\big(\R^d \setminus B_{2^{j} r}(x_0)\big) \leq c_3
(2^{j}r)^{-\gamma}/r^{-\alpha} \leq c_3 2^{-\gamma j} \,.
\end{align*}
Recall that we assume $\gamma < \alpha$ in (U2). Condition
(\ref{eq:assum_nu_r}) now
holds
true because of $2^{-\gamma} < 1$ and $c_3^{1/j} \to 1$ for $j \to \infty$.

Let $r \in (0,1)$ and $u \in
L^\infty(\R^d) \cap H^{\alpha/2}_{\text{loc}}(B_{r}(x_0))$
satisfy $\cE(u,\phi) = 0$ 
for every $\phi \in C_c^\infty(B_r(x_0))$ and $u\geq 0$ in $B_{r}(x_0)$. Then
Theorem \ref{theo:weak_harnack} implies
\[   c_4 \inf\limits_{B_{r/4}(x_0)} u \geq \big( \fint\limits_{B_{r/2}(x_0)}
u(x)^{p_0} \,
dx \big)^{1/p_0} - c_4 \sup\limits_{x \in B_{r/2}(x_0)} r^{\alpha}
\int\limits_{\R^d \setminus
B_r(x_0)} u^-(z) k(x,z) \, dz  \] 
with some appropriate constant $c_4>0$. Here we replaced the radius One by
some arbitrary radii $r
\in (0,1)$. This is possible since $(L1), (U1)$ and $(U2)$ allow for scaling.
Finally,
we note that, with some $c_5 >0$ 
\begin{align*}
&\sup\limits_{x \in B_{r/2}(x_0)} r^{\alpha} \int\limits_{\R^d
\setminus
B_r(x_0)} u^-(z) k(x,z) \, dz \\
 &\leq c_5 \big( \int\limits_{\R^d
\setminus B_r(x_0)} U(y-x_0) \, dy \big)^{-1} \sup\limits_{x \in
B_{r/2}(x_0)} \int\limits_{\R^d
\setminus
B_r(x_0)} u^-(z) U(z-x) \, dz \,,
\end{align*}
where we used Assumption (\ref{A:LU}) and the estimate 
\begin{align}
 \int\limits_{\R^d\setminus B_{r}} U(z) \, dz  &\leq r^{-2}
\int\limits_{\R^d \setminus B_{r}} (r^2 \wedge
|z|^2) U(z) \, dz \leq C_4 r^{-\alpha} \,,
\end{align}
which follows from Lemma \ref{lem:U1'}. Condition
(\ref{eq:hoelder-harnack-ass_corr}) now follows. The proof is complete. 
\end{proof}

\section{Appendix}\label{sec:appendix}

In this appendix we provide a global comparability result, i.e. we study
comparability in the whole of $\R^d$.

\begin{prop}\label{prop:upperRd}
If (U0) and (U1) hold, then there exists a constant $c=c(\alpha,d, C_1, C_0)$ such that
\begin{equation}\label{Rdupper2Rd}
 \E^k_{\R^d}(u,u) \leq c (\E_{\R^d}^\alpha(u,u) + \|u\|_{L^2(\R^d)}^2), \quad u\in L^2(\R^d).
\end{equation}
Furthermore, if (\ref{U1}) is satisfied for \emph{all} $r>0$, then
\begin{equation}\label{RdupperRd}
 \E^k_{\R^d}(u,u) \leq c \E_{\R^d}^\alpha(u,u), \quad u\in L^2(\R^d).
\end{equation}
If the constants $C_0$ and $C_1$ in (U0) and (U1) are independent of $\alpha \in (\alpha_0,2)$, where $\alpha_0>0$,
then so are the constants in \eqref{Rdupper2Rd} and \eqref{RdupperRd}.
\end{prop}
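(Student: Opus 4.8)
The plan is to repeat the proof of Proposition~\ref{prop:upper}, which becomes simpler here because $D=\R^d$ and no extension operator is involved. First I would use \eqref{A:LU} together with the substitution $z=x-y$ to obtain
\[
\E^k_{\R^d}(u,u)\ \le\ \int_{\R^d}\!\int_{\R^d}\big(u(y+z)-u(y)\big)^2\,U(z)\,dz\,dy ,
\]
and then, by Tonelli's theorem (the integrand is nonnegative) and Plancherel's identity applied to the $L^2(\R^d)$ function $y\mapsto u(y+z)-u(y)$, whose Fourier transform is $(e^{i\xi\cdot z}-1)\widehat u(\xi)$, rewrite the right-hand side as $\int_{\R^d} m(\xi)\,|\widehat u(\xi)|^2\,d\xi$, where
\[
m(\xi)=\int_{\R^d} 4\sin^2\!\Big(\tfrac{\xi\cdot z}{2}\Big) U(z)\,dz .
\]

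Next I would estimate the multiplier $m$ exactly as in \eqref{largexi} and the displayed line following it: Lemma~\ref{lem:U1'} gives $m(\xi)\le 4C_4|\xi|^\alpha$ for $|\xi|>2$, while \eqref{A:U0} gives $m(\xi)\le 4C_0$ for $|\xi|\le 2$ (using $|\xi\cdot z/2|\le|z|$ there); hence $m(\xi)\le c'(|\xi|^\alpha+1)$ with $c'=c'(C_0,C_4)$, and therefore
\[
\E^k_{\R^d}(u,u)\ \le\ c'\!\int_{\R^d}\!\big(|\xi|^\alpha+1\big)|\widehat u(\xi)|^2\,d\xi
\ =\ c'\Big(\int_{\R^d}|\xi|^\alpha|\widehat u(\xi)|^2\,d\xi+\|u\|_{L^2(\R^d)}^2\Big).
\]
To finish the first claim I would invoke the Fourier representation of the Gagliardo seminorm (the elementary computation carried out in \cite{DiNezzaPalatucciValdinoci}), namely $\E^\alpha_{\R^d}(u,u)=c(d,\alpha)\int_{\R^d}|\xi|^\alpha|\widehat u(\xi)|^2\,d\xi$ with $c(d,\alpha)$ bounded above and below by positive constants depending only on $d$; this is precisely the reason the normalising factor $\alpha(2-\alpha)$ was built into \eqref{eq:Ead}. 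Together with the Plancherel identity $\|u\|_{L^2(\R^d)}^2=\|\widehat u\|_{L^2(\R^d)}^2$ this yields \eqref{Rdupper2Rd} (when $\E^\alpha_{\R^d}(u,u)=\infty$ the inequality is trivial, so one may assume the left side of the Fourier formula is finite).

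For \eqref{RdupperRd}, suppose \eqref{U1} holds for every $r>0$. Then the dyadic summation used in the proof of Lemma~\ref{lem:U1'} shows that \eqref{U1'} holds for every $r>0$ as well, with a constant $C_4$ depending only on $C_1$ and $\alpha_0$ (one no longer has to split off the region $|z|>1/2$, so the term $4C_0$ disappears). Using the elementary bound $4\sin^2(\xi\cdot z/2)\le|\xi|^2\big(|z|^2\wedge 4|\xi|^{-2}\big)$ and applying \eqref{U1'} with $r=2|\xi|^{-1}$ then gives $m(\xi)\le 4C_4|\xi|^\alpha$ for every $\xi\ne0$, so the term $\|u\|_{L^2(\R^d)}^2$ drops out and $\E^k_{\R^d}(u,u)\le 4C_4\int_{\R^d}|\xi|^\alpha|\widehat u(\xi)|^2\,d\xi\le c\,\E^\alpha_{\R^d}(u,u)$. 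Uniformity in $\alpha\in(\alpha_0,2)$ is then immediate, since $C_4$ depends only on $C_0,C_1,\alpha_0$ by Lemma~\ref{lem:U1'} and the comparability constant between $\E^\alpha_{\R^d}(u,u)$ and $\int_{\R^d}|\xi|^\alpha|\widehat u|^2\,d\xi$ depends only on $d$.

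I do not expect a genuine obstacle here. The only point requiring some care is the uniform-in-$\alpha$ comparability of $\E^\alpha_{\R^d}(u,u)$ and $\int_{\R^d}|\xi|^\alpha|\widehat u(\xi)|^2\,d\xi$, which rests on the known asymptotics of the normalising constant in the Fourier formula for the Gagliardo seminorm; a minor technical subtlety is justifying Fubini and Plancherel for $u$ only assumed to lie in $L^2(\R^d)$ rather than $L^1(\R^d)$, which is handled respectively by nonnegativity of the integrand and by the fact that the Fourier transform is an isometry of $L^2(\R^d)$.
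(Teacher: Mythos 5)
Your proposal is correct and follows essentially the same route as the paper, which proves the result by rerunning the Plancherel/Fubini argument of Proposition~\ref{prop:upper} with the identity in place of the extension operator (so no Poincar\'e inequality is needed), and, for \eqref{RdupperRd}, by observing exactly as you do that (U1) for all $r>0$ yields (U1') for all $r>0$ and hence the multiplier bound $4C_4|\xi|^\alpha$ for every $\xi$, which removes the $\|u\|_{L^2}^2$ term. Your uniform-in-$\alpha$ comparability of $\E^\alpha_{\R^d}(u,u)$ with $\int|\xi|^\alpha|\widehat u(\xi)|^2\,d\xi$ is the identity \eqref{Ealpha} together with $\cA_{d,-\alpha}\asymp\alpha(2-\alpha)$, as stated in the paper.
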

\begin{proof}
By $E$ we denote the identity operator from $H^{\alpha/2}(\R^d)$ to itself.
One easily checks that the proof of Proposition~\ref{prop:upper} from (\ref{Plproof}) until
(\ref{EkdEa}) works also in the present case of $D=\R^d$.
Hence (\ref{Rdupper2Rd}) follows.

To prove (\ref{RdupperRd}) we first observe that when (\ref{U1}) holds for all $r>0$, then
we may also get inequality (\ref{U1'}) in Lemma~\ref{lem:U1'} for \emph{all}
$r>0$.
Consequently, (\ref{largexi}) holds for \emph{all} $r>0$, we plug it into (\ref{Plproof2})
and we are done.
\end{proof}

We consider the following condition.
\begin{itemize}
\item[\as{2}]
 There exists $c_0 >0$ such that for all
$h \in S^{d-1}$ and all $0<r< r_0$
\begin{equation}\label{A0}
 \int_{\R^d} r^2 \sin^2\Big(\frac{h \cdot z}{r}\Big) L(z) \,dz \geq c_0
r^{2-\alpha}.
\end{equation}
\end{itemize}

Clearly (L1) implies \as{2} for $r_0=1$, and if $C_1$ is independent of $\alpha \in (\alpha_0,2)$, where $\alpha_0>0$,
then so is $c_0$.
Condition \as{2} is also satisfied if for all $h \in S^{d-1}$ and all $0<r< r_0$
\begin{equation}\label{A2}
 \int_{B(0,r)} |h \cdot z|^2 L(z) \,dz \geq c_2 r^{2-\alpha}.
\end{equation}
We note that (\ref{comp0}) under condition (\ref{A2}) has been proved in \cite{AbelsHusseini}
by Abels and Husseini.
The following theorem extends their result by giving a \emph{characterisation}
of functions $L$
admitting comparability (\ref{comp0}).
We stress that $r_0=\infty$ is allowed, and in such a case we put $\frac{1}{r_0^\alpha} = 0$.

\begin{thm}\label{thm:Rd}
Let $0<r_0\leq \infty$.
If \as{2} holds, then
\begin{equation}\label{comp0}
 \E^\alpha_{\R^d}(u,u) \leq \frac{1}{c_0} \E^k_{\R^d}(u,u) + \frac{2^\alpha}{r_0^\alpha} \|u\|_{L^2}^2\,,
\quad u\in C_c^1(\R^d).
\end{equation}
Conversely, if for some $c<\infty$
\begin{equation}\label{comp0conv}
 \E^\alpha_{\R^d}(u,u) \leq c \iint (u(x)-u(y))^2 L(x-y)\,dy\,dx +
\frac{2^\alpha}{r_0^\alpha} \|u\|_{L^2}^2\,,\quad
u\in \mathcal{S}(\R^d),
\end{equation}
then \as{2} holds.
\end{thm}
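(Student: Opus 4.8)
The plan is to use the Fourier/Plancherel representation of both sides, exactly as in the proof of Proposition~\ref{prop:upper}, to turn the comparability of the quadratic forms into a pointwise comparability of their Fourier multipliers. For $u \in C_c^1(\R^d)$ one has, by Plancherel and Fubini,
\[
\iint (u(x)-u(y))^2 L(x-y)\,dy\,dx = \int_{\R^d} m_L(\xi)\,|\hat u(\xi)|^2\,d\xi, \qquad m_L(\xi) = \int_{\R^d} 4\sin^2\Big(\tfrac{\xi\cdot z}{2}\Big) L(z)\,dz,
\]
and similarly $\E^\alpha_{\R^d}(u,u) = \alpha(2-\alpha)\int_{\R^d} m_\alpha(\xi)|\hat u(\xi)|^2\,d\xi$ with $m_\alpha(\xi) = \int 4\sin^2(\xi\cdot z/2)|z|^{-d-\alpha}\,dz = c_{d,\alpha}|\xi|^\alpha$, where $\alpha(2-\alpha) c_{d,\alpha}$ is comparable to a dimensional constant (this is the standard identity behind $\cA_{d,-\alpha}\asymp\alpha(2-\alpha)$). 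So it suffices to show the pointwise bound $\alpha(2-\alpha)m_\alpha(\xi) \leq \tfrac{1}{c_0} m_L(\xi) + \tfrac{2^\alpha}{r_0^\alpha}$ for every $\xi$, and then integrate against $|\hat u(\xi)|^2$.

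For the first (direct) implication, I would fix $\xi \neq 0$, write $\xi = |\xi| h$ with $h \in S^{d-1}$, and set $r = |\xi|^{-1}$. If $|\xi| > 1/r_0$, i.e. $r < r_0$, then \as{2} applied with this $h$ and $r$ gives exactly
\[
\int_{\R^d} 4\sin^2\Big(\tfrac{\xi\cdot z}{2}\Big) L(z)\,dz = 4|\xi|^{-2}\cdot\ldots
\]
wait — more precisely \as{2} reads $\int r^2\sin^2(h\cdot z/r)L(z)\,dz \geq c_0 r^{2-\alpha}$, and since $r^2\sin^2(h\cdot z/r) = |\xi|^{-2}\sin^2(\xi\cdot z)$ one needs to relate $\sin^2(\xi\cdot z)$ to $\sin^2(\xi\cdot z/2)$; using $\sin^2 t \leq 4\sin^2(t/2)$ (valid for all real $t$, since $\sin t = 2\sin(t/2)\cos(t/2)$) one gets $m_L(\xi) \geq 4\cdot\tfrac14 \cdot c_0 |\xi|^{\alpha-2}\cdot|\xi|^2 \cdot$ const, which after inserting the correct normalization matches $c_0^{-1}$ times $\alpha(2-\alpha) m_\alpha(\xi)$ up to the explicit constants chosen in the statement. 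For $|\xi| \leq 1/r_0$ the multiplier $m_\alpha(\xi) = c_{d,\alpha}|\xi|^\alpha \leq c_{d,\alpha} r_0^{-\alpha}$ and $\alpha(2-\alpha)c_{d,\alpha}$ is bounded by a dimensional constant, so this regime is absorbed into the $\tfrac{2^\alpha}{r_0^\alpha}\|u\|_{L^2}^2$ term via $\int|\hat u|^2 = \|u\|_{L^2}^2$; the constant $2^\alpha$ is chosen precisely so this works out. Integrating the pointwise inequality against $|\hat u(\xi)|^2\,d\xi$ yields \eqref{comp0}. The extension from $C_c^1$ to a larger class, or the case $r_0 = \infty$ (where the low-frequency term vanishes), is routine.

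For the converse, I would run the same Fourier computation on \eqref{comp0conv}: it states $\alpha(2-\alpha)c_{d,\alpha}|\xi|^\alpha \int|\hat u|^2 \leq c\int m_L(\xi)|\hat u|^2 + \tfrac{2^\alpha}{r_0^\alpha}\int|\hat u|^2$ for all Schwartz $u$. Since $|\hat u|^2$ can be taken to be an approximate identity concentrating near any prescribed frequency $\xi_*$ (Schwartz functions whose Fourier transforms are bumps at $\xi_*$), and since $m_L$ is continuous (it is an absolutely convergent integral by \eqref{A:U0}, as $L \leq U$), testing against such a sequence gives the pointwise inequality $\alpha(2-\alpha)c_{d,\alpha}|\xi|^\alpha \leq c\, m_L(\xi) + \tfrac{2^\alpha}{r_0^\alpha}$ for every $\xi$. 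Specializing $\xi = h/r$ for $h \in S^{d-1}$ and $0 < r < r_0$ and rearranging (using again $\sin^2 t \geq$ a multiple of $\sin^2(t/2)$ on the relevant range, or simply carrying $m_L$ in its $\sin^2(\cdot/2)$ form and noting $\int r^2\sin^2(h\cdot z/(2r))L(z)\,dz$ is what appears) recovers $\int r^2\sin^2(h\cdot z/r)L(z)\,dz \geq c_0 r^{2-\alpha}$ with $c_0$ determined by $c$; when $r < r_0$ the term $\tfrac{2^\alpha}{r_0^\alpha}$ is dominated by $\tfrac{2^\alpha}{r^\alpha}$, which has the right homogeneity to be absorbed. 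The main obstacle is bookkeeping the exact constants $c_0$, $\tfrac{1}{c_0}$, $2^\alpha$ and the $\alpha$-dependent normalization $\alpha(2-\alpha)c_{d,\alpha}$ so that the clean inequalities \eqref{comp0} and \eqref{comp0conv} come out with precisely those coefficients; the analytic content is just Plancherel plus the elementary trigonometric inequality $\sin^2 t \leq 4\sin^2(t/2)$.
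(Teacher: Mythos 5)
Your direct implication is essentially the paper's own argument: Plancherel turns both forms into Fourier multipliers, condition \as{2} applied with $h=\xi/|\xi|$ gives a lower bound $m_L(\xi):=\int_{\R^d}4\sin^2(\xi\cdot z/2)\,L(z)\,dz\geq c_0|\xi|^\alpha$ at high frequencies, and the low frequencies $|\xi|\lesssim 1/r_0$ are absorbed into the $L^2$ term. The only cosmetic difference is that you take $r=|\xi|^{-1}$ and then need $\sin^2 t\leq 4\sin^2(t/2)$, whereas the paper takes $r=2/|\xi|$ so that the half-angle matches \as{2} exactly (hence its threshold $|\xi|>2/r_0$); both work, and the normalization bookkeeping you defer is exactly what the paper's identity \eqref{Ealpha} supplies.

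In the converse, however, one step as written would fail. After testing \eqref{comp0conv} against concentrated $|\hat u|^2$ and using continuity of $\xi\mapsto m_L(\xi)$, you get (up to normalization) the pointwise bound $|\xi|^\alpha\leq c\,m_L(\xi)+2^\alpha r_0^{-\alpha}$ for all $\xi$. You then set $\xi=h/r$ with $r<r_0$ and claim the term $2^\alpha r_0^{-\alpha}$ ``is dominated by $2^\alpha r^{-\alpha}$, which has the right homogeneity to be absorbed.'' That is backwards: being dominated by something \emph{larger} than the left-hand side $r^{-\alpha}$ is useless, and indeed for $r$ comparable to $r_0$ (say $r>r_0/2$) one has $2^\alpha r_0^{-\alpha}\geq r^{-\alpha}$, so at those frequencies the pointwise inequality is satisfied by the constant term alone and gives no lower bound on $m_L$ whatsoever. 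The inequality is only exploitable where the additive constant is at most half of $|\xi|^\alpha$, i.e.\ for $|\xi|\geq 2^{1+1/\alpha}r_0^{-1}$, and this yields only \as[2^{-1/\alpha}r_0]{2} (with $c_0=2^{\alpha-3}c^{-1}$ in the paper's bookkeeping), a statement for radii up to $2^{-1/\alpha}r_0<r_0$. The missing idea is the final radius-upgrading step: since $\sin^2(t/2)\geq\tfrac14\sin^2 t$, enlarging the admissible radius by a factor $2$ costs only a factor $4$ in $c_0$, and finitely many such doublings (roughly $1/\alpha$ of them, since $2^{-1/\alpha}$ may be small) convert \as[2^{-1/\alpha}r_0]{2} into \as{2} with \emph{some} constant $c_0$ --- which is all the theorem claims. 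This is precisely how the paper concludes; with that step inserted, your argument coincides with it.
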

\begin{proof}
We change the variable $x$ to $y+z$ and use Plancherel formula. Recalling that $(u(\cdot+z))^\wedge(\xi) = e^{i\xi\cdot z}\hat{u}(\xi)$
we obtain
\begin{align}
\E^k_{\R^d}(u,u) &\geq
\iint (u(x)-u(y))^2 L(x-y)\,dy\,dx \nonumber\\
 &=\int \left( \int |e^{i\xi\cdot z}-1|^2 L(z)\,dz \right)
|\hat{u}(\xi)|^2\,d\xi  \nonumber\\
 &=\int \left( \int 4\sin^2\Big(\frac{\xi\cdot z}{2}\Big) L(z)\,dz \right)
|\hat{u}(\xi)|^2\,d\xi. \label{Ek}
\end{align}
If \as{2} holds, then for all $|\xi| > 2/ r_0$
\begin{align*}
 \int 4\sin^2\Big(\frac{\xi\cdot z}{2}\Big) L(z)\,dz &\geq
\frac{4c_0}{2^\alpha} |\xi|^\alpha \geq c_0|\xi|^\alpha.
\end{align*}
For $|\xi| \leq 2/ r_0$ we have $|\xi|^\alpha \leq (2/r_0)^\alpha$.
Inequality (\ref{comp0})  follows from 
\begin{equation}\label{Ealpha}
\frac{\cA_{d,-\alpha}}{2\alpha(2-\alpha)} \E^\alpha_{\R^d}(u,u) = \int_{\R^d}|\xi|^\alpha |\hat{u}(\xi)|^2\,d\xi.
\end{equation}
Now we prove the converse. Assume (\ref{comp0conv}).
By (\ref{Ek}), the right hand side of (\ref{comp0conv}) equals
\[
 \int \left( c\int 4\sin^2\Big(\frac{\xi\cdot z}{2}\Big) L(z)\,dz  +
\frac{2^\alpha}{r_0^\alpha}\right) |\hat{u}(\xi)|^2\,d\xi,
\]
hence by (\ref{Ealpha}) and (\ref{comp0conv}) we obtain that
\begin{equation}\label{closetoA0}
c\int 4\sin^2\Big(\frac{\xi\cdot z}{2}\Big) L(z)\,dz  +
\frac{2^\alpha}{r_0^\alpha} \geq |\xi|^\alpha, 
  \quad \textrm{for a.e. $\xi\in\R^d$.}
\end{equation}
By continuity of the function
\[
 \R^d\setminus\{0\} \ni \xi \mapsto \int 4\sin^2\Big(\frac{\xi\cdot z}{2}\Big)
L(z)\,dz,
\]
(\ref{closetoA0}) holds for all $\xi\in\R^d$. For $|\xi|\geq 2^{1+1/\alpha}r_0^{-1}$
we have by (\ref{closetoA0})
\[
c\int 4\sin^2\Big(\frac{\xi\cdot z}{2}\Big) L(z)\,dz   \geq
\frac{|\xi|^\alpha}{2},
\]
and hence \as[2^{-1/\alpha}r_0]{2}{}  holds with $c_0=2^{\alpha-3}c^{-1}$.
Since
\[
 \sin^2\Big(\frac{h \cdot z}{2r}\Big) \geq \frac{1}{4}  \sin^2\Big(\frac{h \cdot z}{r}\Big),
\]
also \as{2} holds with \emph{some} constant $c_0$.
\end{proof}


\def\cprime{$'$}

\end{document}